\numberwithin{equation}{section}
\let\oldmarginpar\marginpar
\renewcommand\marginpar[1]{\-\oldmarginpar[\raggedleft\bf\footnotesize #1]%
{\raggedright\bf\footnotesize #1}}
\newtheorem{thm}{Theorem}[section]
\newtheorem{cor}[thm]{Corollary}
\newtheorem{prop}[thm]{Proposition}
\newtheorem{lem}[thm]{Lemma}
\theoremstyle{definition}
\newtheorem*{acknowledgement}{Acknowledgement}
\newcommand{\rada}{{\textup{rad}\kern.7pt A}}
\newcommand{\El}[1]{{\mathcal{E}\kern-2.6pt\ell({#1})}}
\newcommand{\Ell}{{\mathcal{E}\kern-2.6pt\ell}}
\newcommand{\ElA}{{\mathcal{E}\kern-2.6pt\ell(A)}}
\newcommand{\ElnA}{{\mathcal{E}\kern-2.6pt\ell_n(A)}}
\newcommand{\ElthA}{{\mathcal{E}\kern-2.6pt\ell_3(A)}}
\newcommand{\LE}{{\mathscr L}(E)}
\newcommand{\invA}{{\textup{Inv}\kern.5pt A}}
\newcommand{\LS}{{\bm L(S)}}
\newcommand{\RS}{{\bm R(S)}}
\newcommand{\VS}{{\bm V\mkern-2.5mu(S)}}
\newcommand{\VpS}{{\bm{V'}\mkern-2.5mu(S)}}
\newcommand{\LSrho}{{\bm L(S_\varrho)}}
\newcommand{\RSrho}{{\bm R(S_\varrho)}}
\newcommand{\VSrho}{{\bm V\mkern-2.5mu(S_\varrho)}}
\newcommand{\VpSrho}{{\bm{V'}\mkern-2.5mu(S_\varrho)}}
\newcommand {\N}{\mathbb{N}} 
\newcommand {\CC}{\mathbb{C}} 
\newcommand{\Dim}{{\sf dim}\,}
\newcommand{\lDim}{{\sf ldim}\,}
\newcommand{\spa}{{\rm span}}
\newcommand{\Tr}{{\sf Tr}}
\newcommand{\spn}[1]{{\|{#1}\|_\sigma}}
\def\C*{{\sl C*}-algebra}
\def\Cs*{{\sl C*}-subalgebra}
\newcommand {\be}{\begin{equation*}}
\newcommand {\ee}{\end{equation*}}
\newcommand {\beq}{\begin{eqnarray*}}
\newcommand {\eeq}{\end{eqnarray*}}
\begin{document}
\title[More elementary operators that are spectrally bounded]{More elementary operators that are\\ spectrally bounded}
\author{Nadia Boudi}
\address{D\' epartement de Math\' ematiques, Universit\' e Moulay Ismail, Facult\'{e} des Sciences, Mekn\`es, Maroc}
\email{nadia\_boudi@hotmail.com}
\author{Martin Mathieu}
\address{Pure Mathematics Research Centre, Queen's University Belfast, University Road, Bel\-fast BT7 1NN, Northern Ireland}
\email{m.m@qub.ac.uk}

\begin{abstract}
We discuss some necessary and some sufficient conditions for an elementary operator
$x\mapsto\sum_{i=1}^n a_ixb_i$ on a Banach algebra $A$ to be spectrally bounded.
In the case of length three, we obtain a complete characterisation when $A$
acts irreducibly on a Banach space of dimension greater than three.
\end{abstract}

\subjclass[2000]{47B47; 46H99, 47A10, 47B48, 47L10}
\keywords{Elementary operator; quasi-nilpotent; spectrally bounded}
\dedicatory{Dedicated to the memory of \hbox{T.\kern.7ptT.\kern1ptWest}.}
\date{\today}

\maketitle

\section{Introduction}\label{sect:intro}

\noindent
Let $A$ and $B$ be unital Banach algebras over the complex field~$\CC$. Let $r(x)$ denote the spectral radius of
an element~$x$ in $A$ or~$B$. We say a linear mapping \hbox{$T\colon A\to B$} is \textit{spectrally bounded\/} if,
for some constant $M\geq0$ and all $a\in A$, the estimate $r(Ta)\leq M\,r(a)$ holds.
This concept, together with its relatives \textit{spectrally isometric\/} (i.e., $r(Ta)=r(a)$ for all $a\in A$) and
\textit{spectrally infinitesimal\/} (i.e., $r(Ta)=0$ for all $a\in A$), was introduced in~\cite{Mat1} in order to initiate a systematic
investigation of mappings that had, on and off, been discussed in the literature; see, e.g.,~\cite{Aup} or~\cite{Pta}.
A number of fundamental properties of spectrally bounded operators can be found in~\cite{MaSc} and~\cite{MaSo}
while~\cite{MaSc2} contains a structure theorem for such operators defined on properly infinite von Neumann algebras.
Spectrally bounded operators also appear in connection with the noncommutative Singer--Wermer conjecture and with
Kaplansky's problem on invertibility-preserving operators; for details see~\cite{Mat2}.

An \textit{elementary operator\/} on~$A$ is a bounded linear operator $S\colon A\to A$ that can be written in the form
$Sx=\sum_{i=1}^n a_ixb_i$, $x\in A$ for some $a_1,\ldots,a_n$,  $b_1,\ldots,b_n\in A$.
These operators appear quite naturally in many contexts; for instance, if $A$ is finite dimensional and semisimple, every linear mapping
is of this form. In general, additional assumptions on the algebra and on an operator~$S$ may ``force'' the operator to be elementary:
a typical example is the innerness of a derivation $d\colon A\to A$, that is, $dx=ax-xa$ for some $a\in A$.
Properties of elementary operators have been studied under a vast variety of aspects; we refer the reader to~\cite{Ma92}
and~\cite{CuMa2} for an overview.

Despite the rich literature on spectrally bounded operators in general, and spectral isometries in particular, see, e.g.,
\cite{BrSe2, Co09, CoRe, Mat2, MaSo2, Sem} and the references contained therein, the supply of examples is still somewhat limited.
It is thus close at hand to ask which elementary operators are spectrally bounded, as these operators are given in a more concrete form.
Continuing our work started in~\cite{NaMa11}, we aim to provide further answers to this question in the present paper.
We shall discuss a number of necessary conditions which, in the case of length three, turn out to be sufficient too.
Our new approach exploits the relation with locally quasi-nilpotent elementary operators which, in the algebraic setting,
were studied in~\cite{NaMa13}; in fact, that paper should be read in conjunction with the present one.

In order to illustrate the ideas, let us assume  that the elementary operator $S\colon A\to A$ is spectrally infinitesimal.
Let $\varrho$ be an irreducible representation of $A$ on a Banach space~$E$. Since $S$ induces an elementary operator
$S_\varrho\colon\varrho(A)\to\varrho(A)$ via $S_\varrho\circ\varrho=\varrho\circ S$, $S_\varrho$ is spectrally infinitesimal too.
By Jacobson's density theorem \cite[Theorem~4.2.5]{Aup}, $\varrho(A)$ is a dense (i.e., $n$-transitive for all~$n$) algebra on~$E$,
and we can apply the setting of~\cite{NaMa13}. Suppose $\zeta\in E$ and $x\in A$ are such that
$\varrho(x)\varrho(b_ia_j)\zeta\subseteq\CC\zeta$ for all~$i,j$. It is easy to see that this implies that
\[
S_\varrho\varrho(x)\bigl(\spa\{\varrho(a_1)\zeta,\ldots,\varrho(a_n)\zeta\}\bigr)\subseteq\spa\{\varrho(a_1)\zeta,\ldots,\varrho(a_n)\zeta\};
\]
consequently, the restriction of
$S_\varrho\varrho(x)$ to the finite-dimensional invariant subspace $\spa\{\varrho(a_1)\zeta,\ldots,\varrho(a_n)\zeta\}$ has to be nilpotent
which then allows us to apply the theory developed in~\cite{NaMa13}. A first application of this method is presented in
Proposition~\ref{prop:infini_elops} and elaborations on this idea provide the main techniques for Section~\ref{sect:spbdd};
see, in particular, Lemma~\ref{spb-nilp}.

The general questions that we pursue in this context are as follows. Let the elementary operator $Sx=\sum_{i=1}^n a_ixb_i$, $x\in A$
on $A$ be given.
\begin{enumerate}[(a)]
\item Suppose $S$ is spectrally bounded.
\begin{enumerate}[(i)]
\item What properties of the coefficients $a_i$, $b_i$ can we derive?
\item Can we find an ``improved'' representation of $S$ in the sense that the new coefficients have better properties?
\end{enumerate}
\item Which conditions on the coefficients $a_i$, $b_i$ ensure that $S$ is spectrally bounded?
\end{enumerate}
After collecting a number of basic properties and tools in Section~\ref{sect:pres}, we give several answers to question~(a) above
in Section~\ref{sect:spbdd} culminating in Theorem~\ref{spb-Ger} which describes the size of various spaces associated
to the coefficients of the induced elementary operator in an irreducible representation of~$A$ in terms of the local dimension.
Specialising to the case of length two elementary operators we derive further properties at the end of this section
and also correct a small oversight in \cite[Theorem 3.5]{NaMa11} concerning an exceptional case that can appear in dimension two.

A full answer to both questions~(a) and~(b) is, at present, only available for elementary operators of short length.
It was given in~\cite{NaMa11} for length two and is provided for length three
under the assumption that $A$ acts irreducibly on a Banach space of dimension greater than three
in Section~\ref{sect:length3} below (spaces with smaller dimension need to be treated separately).
The formulation seems too technical to allow an extension to the general case so far.

\vspace{-.3cm}
\section{Prerequisites}\label{sect:pres}

\noindent
Throughout this paper, $A$ will denote a unital complex Banach algebra, and its group of invertible elements is written
as~$\invA$. We let $\rada$ stand for the Jacobson radical of~$A$, see~\cite[p.~34]{Aup}.
The algebra of all bounded linear operators on a Banach space~$E$ will be designated by~$\LE$.

An \textit{elementary operator\/} on $A$ is a bounded linear mapping $S\colon A\to A$ that can be written in the form
\begin{equation}\label{eq:elop-def}
Sx =\sum_{i=1}^n a_ixb_i\quad(x\in A),
\end{equation}
for some $a_i,\,b_i\in A$ and some $n\in\N$. Special cases are $L_a\colon x\mapsto ax$, $R_b\colon x\mapsto xb$ and
$M_{a,b}=L_aR_b$. Clearly the representation  of $S$ in a sum as in~\eqref{eq:elop-def} is not unique.
The smallest $n\in\N$ such that the non-zero elementary operator $S$ can be written as $S=\sum_{i=1}^n M_{a_i,b_i}$
is called \textit{the length of}~$S$ and will be abbreviated as~$\ell(S)$. We put $\ell(0)=0$. If $S=\sum_{i=1}^n M_{a_i,b_i}$
and $\ell(S)=n$ then, evidently, the sets $\{a_1,\ldots,a_n\}$ and $\{b_1,\ldots,b_n\}$ are linearly independent.

We will denote by $\ElA$ and $\ElnA$, respectively, the algebra of all elementary operators on~$A$ and
the set of all elementary operators of length~$n$, respectively.

Whenever convenient, we shall abbreviate an $n$-tuple $(a_1,\ldots,a_n)$ of elements of $A$ by~$\bm a$ and indicate that $S\in\ElA$
is written as ${S} =\sum_{i=1}^n M_{a_i, b_i}$ by $S=S_{\bm a,\bm b}$.
We shall further use the following notation for  $S=S_{\bm a,\bm b}$:
\begin{equation*}\label{eq:notation}
\begin{split}
\LS  &= \spa \{a_1, \ldots, a_n \},\\
\RS  &= \spa \{b_1, \ldots, b_n\},\\
\VS  &= \spa \{b_i a_j: 1 \leq i,j \leq n\},\text{and}\\
\VpS&=\VS+\CC I,
\end{split}
\end{equation*}
together with the abbreviations $S^*$ for $S_{\bm b,\bm a}$ and $\bm{ba}$ for $\sum_{i=1}^n b_ia_i$.

The way representations of $S\in\ElA$ as in~\eqref{eq:elop-def} are related to each other can be rather intricate;
however, we shall be content with representations arising from each other by linear combinations of the coefficients.
In this case, we have the following result the argument for which is standard but we include a proof in order to illustrate how to work with
different representations of the same elementary operator.
\begin{lem}\label{lem:various-repns}
Let $A$ be a unital Banach algebra,  and let ${S} =\sum_{i=1}^n M_{a_i, b_i}$ be an elementary operator on~$A$.
Suppose that ${S}= \sum_{i=1}^m M_{c_i, d_i}$, where $c_i \in \LS$ and $ d_i \in \RS$ for all $1 \leq i \leq m$.
Then $\sum_{i=1}^n b_i a_i - \sum_{i=1}^m d_i c_i \in \rada$.
\end{lem}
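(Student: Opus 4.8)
The plan is to reduce the assertion to the vanishing of a single elementary operator and then to test that against irreducible representations, where Jacobson's density theorem does the work.

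Since $c_i\in\LS$ and $d_i\in\RS$, write $c_i=\sum_j\lambda_{ij}a_j$ and $d_i=\sum_k\mu_{ik}b_k$ for suitable scalars. Substituting into $S=\sum_i M_{c_i,d_i}$ and comparing with $S=\sum_j M_{a_j,b_j}$ gives
\[
\sum_{j,k}\Bigl(\delta_{jk}-\sum_i\lambda_{ij}\mu_{ik}\Bigr)\,M_{a_j,b_k}=0 ,
\]
while the same scalars yield $\sum_i b_ia_i-\sum_i d_ic_i=\sum_{j,k}\bigl(\delta_{jk}-\sum_i\lambda_{ij}\mu_{ik}\bigr)\,b_ka_j$. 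So it is enough to prove the self-contained claim: \emph{if $u_1,\dots,u_N,v_1,\dots,v_N\in A$ satisfy $\sum_\nu M_{u_\nu,v_\nu}=0$, then $\sum_\nu v_\nu u_\nu\in\rada$.} (One could instead apply this claim directly to $\sum_i M_{a_i,b_i}-\sum_i M_{c_i,d_i}=0$, but exhibiting the scalars is the instructive point.)

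To prove the claim, recall that $\rada$ is the intersection of the kernels of the irreducible representations of $A$; fix one, $\varrho\colon A\to\LE$ with $E$ a Banach space, and set $R_\nu=\varrho(u_\nu)$, $T_\nu=\varrho(v_\nu)$. From $\sum_\nu u_\nu x v_\nu=0$ $(x\in A)$ we get $\sum_\nu R_\nu\varrho(x)T_\nu=0$ for all $x\in A$. By Jacobson's density theorem \cite[Theorem~4.2.5]{Aup}, $\varrho(A)$ acts $n$-transitively on $E$ for every $n$; hence, given $\eta\in E$ and an arbitrary $Y\in\LE$, we may pick $x\in A$ with $\varrho(x)$ and $Y$ agreeing on all the vectors $T_\nu\eta$, and conclude $\bigl(\sum_\nu R_\nu Y T_\nu\bigr)\eta=0$. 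Thus $\sum_\nu R_\nu Y T_\nu=0$ for every $Y\in\LE$. Now take $Y$ to be a rank-one operator $w\mapsto\psi(w)\eta$ with $\psi\in E^{*}$, $\eta\in E$: this gives $\sum_\nu\psi(T_\nu w)\,R_\nu\eta=0$ for all $w,\eta\in E$ and $\psi\in E^{*}$, hence $\sum_\nu\psi(T_\nu w)\,R_\nu=0$ in $\LE$. Evaluating at a fixed $\xi\in E$, writing $\sigma_\nu=R_\nu\xi$, and expanding $\sigma_\nu=\sum_\alpha c_{\nu\alpha}\epsilon_\alpha$ in a basis $\epsilon_1,\dots,\epsilon_d$ of $\spa\{\sigma_1,\dots,\sigma_N\}$, linear independence of the $\epsilon_\alpha$ forces $\sum_\nu c_{\nu\alpha}\,\psi(T_\nu w)=0$ for all $w,\psi$ and each $\alpha$; since $E^{*}$ separates points this gives $\sum_\nu c_{\nu\alpha}T_\nu=0$ for each $\alpha$. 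Therefore $\varrho\bigl(\sum_\nu v_\nu u_\nu\bigr)\xi=\sum_\nu T_\nu\sigma_\nu=\sum_\alpha\bigl(\sum_\nu c_{\nu\alpha}T_\nu\bigr)\epsilon_\alpha=0$, and as $\xi$ and $\varrho$ were arbitrary, $\sum_\nu v_\nu u_\nu\in\rada$.

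The one genuinely delicate step is the density argument: the identity $\sum_\nu R_\nu\varrho(x)T_\nu=0$ is only given for $x$ in the image $\varrho(A)$, and $n$-transitivity must be invoked both to insert arbitrary rank-one operators and then to ``unswap'' the product $\sum_\nu T_\nu R_\nu$. The swapped order of the factors is exactly why the conclusion is only membership in $\rada$, not the literal identity $\sum_\nu v_\nu u_\nu=0$, which already fails for $A=\CC\oplus\CC$. Everything else is routine bookkeeping with linear combinations of the coefficients.
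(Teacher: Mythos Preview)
Your proof is correct. The route differs from the paper's: both reduce to an irreducible representation (equivalently, a primitive quotient), but from there the paper invokes a uniqueness theorem for minimal representations of elementary operators in centrally closed prime algebras \cite[Theorem~5.1.7]{ArMa}---which says that if $\sum M_{a_i,b_i}=\sum M_{c_j,d_j}$ with both left-coefficient families linearly independent and of the same cardinality, then the change of coefficients is by an invertible scalar matrix---and then handles the case $m>n$ by an explicit reduction. You instead isolate and prove directly the more portable claim that a vanishing elementary operator $\sum_\nu M_{u_\nu,v_\nu}=0$ forces $\sum_\nu v_\nu u_\nu\in\rada$, via Jacobson density and rank-one test operators. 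Your argument is self-contained (modulo density) and sidesteps both the external citation and the case split on $m$ versus $n$; the paper's version is shorter if one is willing to quote the structure theorem, and it makes explicit the role of central closedness of primitive Banach algebras (extended centroid equal to~$\CC$), which in your approach stays hidden inside the fact that only complex linear combinations ever arise.
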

\begin{proof}
As ${S}$ leaves each primitive ideal $P$ of $A$ invariant and we have to show that, for every~$P$, we have
\[
\sum_{i=1}^n b_i a_i - \sum_{i=1}^m d_i c_i \in P,
\]
we can  assume that $A$ is primitive.

Without loss of generality, we may suppose that ${S}$ has length $n$ and that $\{c_1,\ldots,  c_n\}$ is linearly independent.
First assume that $n=m$. Write $c_i= \sum_{k=1}^n \alpha_{ik} a_k$, $1\leq i\leq n$. Then it follows from \cite[Theorem 5.1.7]{ArMa}, e.g.,
that $b_k= \sum_{i=1}^n \alpha_{ik} d_i$, $1\leq k\leq n$. Now it is easy to see that $\sum_{i=1}^n b_i a_i= \sum_{i=1}^n d_i c_i$.

Next suppose that $n <m$ and write
\begin{equation*}
c_j= \sum_{k=1}^n \alpha_{jk} c_k, \;\; \text {for } n+1 \leq j  \leq m.
\end{equation*}
Then
\begin{equation*}
{S}= \sum_{i=1}^n M_{c_i, d_i}+ \sum_{j=n+1}^m \sum_{k=1}^n  \alpha_{jk}M_{c_k, d_j}
\end{equation*}
which entails that
\begin{equation*}
{S}   = \sum_{i=1}^n M_{c_i,d_i}+ \sum_{k=1}^n \sum_{j=n+1}^m  M_{c_k, \alpha_{jk}d_j}
           = \sum_{k=1}^n M_{c_k,d_k+\sum\limits_{j=n+1}^m\!\alpha_{jk} d_j}.
\end{equation*}
Setting $d'_k= d_k+\sum_{j=n+1}^m \alpha_{jk}d_j$ we find, as shown above, that
$\sum_{k=1}^n d'_k c_k= \sum_{k=1}^n b_ka_k$.
Substituting the above expression for $d_k'$ back into this identity yields the desired conclusion.
\end{proof}
In the above argument we used implicitly that every primitive Banach algebra is centrally closed,
that is, its so-called extended centroid is equal to~$\CC$; the important consequence for us is that
we only have to work with linear combinations over the complex numbers.

We would like to use the fact that an elementary operator leaves every ideal of $A$ invariant to reduce
the task of describing spectrally bounded elementary operators to the case of primitive Banach algebras.
However, the induced elementary operator on the quotient of $A$ by a primitive ideal may not be spectrally bounded
without further assumptions. Resulting from this we will have to work explicitly with irreducible representations
in the following in order to apply the results obtained in~\cite{NaMa13} for locally quasi-nilpotent elementary operators on irreducible
algebras of operators.

Nevertheless, there is a class of Banach algebras (which includes all \C*s, for example) which allows us to reduce
the problem fully to primitive quotients. Recall that a Banach algebra $A$ is said to be an \textit{SR-algebra\/}
if the spectral radius formula holds in every quotient; that is, whenever $I$ is a closed ideal of $A$, for each $x\in A$ we have
\begin{equation}\label{eq:spec-rad-form}
r(x+I)=\inf_{y\in I} r(x+y).
\end{equation}
Whenever $T\colon A\to B$ is a linear mapping, $I$ is a closed ideal of $A$ and $J$ is a closed ideal of $B$ containing $TI$,
we can define an induced linear mapping $\hat T\colon\hat A=A/I\to\hat B=B/J$ by $\hat T(x+I)=Tx+J$, $x\in A$.
If $T$ is spectrally bounded, we define the \textit{spectral norm\/} $\spn T$ of $T$ as the smallest $M\geq0$ such that
$r(Tx)\leq M\,r(x)$ for all $x\in A$, see~\cite{MaSc}. (Note that this is in general not a norm!)

The standard argument for induced bounded linear operators on quotient spaces enables us to prove the following result.
\begin{prop}\label{prop:sr-quotients}
Let $T\colon A\to B$ be a spectrally bounded operator from the unital SR-algebra $A$ into the unital Banach algebra~$B$.
Suppose that $I$ is a closed ideal of $A$ and $J$ is a closed ideal of $B$ containing~$TI$. Then $\hat T\colon\hat A\to\hat B$
is spectrally bounded with $\spn{\hat T}\leq\spn T$.
\end{prop}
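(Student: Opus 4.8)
The plan is to read off $r\bigl(\hat T(x+I)\bigr)$ directly from the definition of $\hat T$ and to reduce the required estimate to spectral boundedness of $T$ together with the spectral radius formula in the quotient $\hat A=A/I$. First I would dispose of the trivial case $J=B$: then $\hat B=\{0\}$ and there is nothing to prove. So assume $J$ is a proper closed ideal; then $\hat B=B/J$ is a unital Banach algebra and the quotient homomorphism $\pi_J\colon B\to\hat B$ is unital and contractive. Since a unital algebra homomorphism between unital algebras cannot enlarge the spectrum ($\lambda-\pi_J(b)=\pi_J(\lambda-b)$ is invertible whenever $\lambda-b$ is), we get $r(\pi_J(b))\leq r(b)$ for every $b\in B$; in particular
\[
r\bigl(\hat T(x+I)\bigr)=r(Tx+J)=r\bigl(\pi_J(Tx)\bigr)\leq r(Tx)\qquad(x\in A).
\]

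Next, the key observation is that $\hat T(x+I)$ is unchanged if $x$ is perturbed inside $I$: for any $y\in I$ we have $Ty\in TI\subseteq J$, hence $Tx+J=T(x+y)+J$, i.e.\ $\hat T(x+I)=\pi_J\bigl(T(x+y)\bigr)$. Combining this with the inequality above and with spectral boundedness of $T$ yields, for every $x\in A$ and every $y\in I$,
\[
r\bigl(\hat T(x+I)\bigr)\leq r\bigl(T(x+y)\bigr)\leq\spn T\,r(x+y).
\]
Finally I would take the infimum over $y\in I$ on the right-hand side. Since $A$ is an SR-algebra, the spectral radius formula~\eqref{eq:spec-rad-form} holds for the closed ideal $I$, so $\inf_{y\in I}r(x+y)=r(x+I)$, and we conclude $r\bigl(\hat T(x+I)\bigr)\leq\spn T\,r(x+I)$ for all $x\in A$. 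Hence $\hat T$ is spectrally bounded with $\spn{\hat T}\leq\spn T$.

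The argument is entirely a manipulation of definitions, so there is no serious obstacle; the only point requiring a little care is the first step, namely that the spectral radius does not grow under the quotient map $B\to B/J$, which is where unitality of $B$ (and the harmless reduction away from the case $J=B$) is used. The SR-hypothesis on $A$ enters exactly once, in the final infimum, and no compatibility between $I$ and $J$ beyond $TI\subseteq J$ is needed, since the $B$-side only uses $J$ to absorb the image $Ty$ of elements $y\in I$.
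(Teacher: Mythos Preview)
Your proof is correct and follows essentially the same route as the paper's: both use that the quotient map $B\to B/J$ does not increase the spectral radius (the paper writes this as $r(Tx+J)\leq\inf_{z\in J}r(Tx+z)$), specialise to $z=Ty$ with $y\in I$, apply spectral boundedness of $T$, and then invoke the SR-property on the $A$-side. The only cosmetic difference is that the paper chooses $y\in I$ with $r(x+y)\leq r(x+I)+\varepsilon$ and lets $\varepsilon\to0$, whereas you take the infimum directly; your additional care about the case $J=B$ and the explicit justification of $r(\pi_J(b))\leq r(b)$ are harmless elaborations.
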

\begin{proof}
Let $x\in A$ and, for given $\varepsilon>0$, let $y\in I$ be such that $r(x+y)\leq r(x+I)+\varepsilon$. Then
\begin{equation*}
\begin{split}
r\bigl(\hat T(x+I)\bigr) &=r(Tx+J)\leq\inf_{z\in J}r(Tx+z)\\
                                              &\leq r(Tx+Ty)\leq\spn T\,r(x+y)\\
                                              &\leq\spn T\bigl(r(x+I)+\varepsilon\bigr),
\end{split}
\end{equation*}
hence $r\bigl(\hat T(x+I)\bigr)\leq\spn T\,r(x+I)$ which yields the claim.
\end{proof}
As an illustration of how smooth the arguments become in the case of \C*s, and to contrast the more elaborate
techniques we have to use in the general situation, we formulate the following consequence which is a special
case of Proposition~\ref{tr-spb} below.
\begin{cor}\label{cor:cstar-quotients}
Let $S\in\ElA$ for a unital \C*~$A$ be spectrally bounded. If $S=S_{\bm a,\bm b}$ then $\bm{ba}\in Z(A)$,
the centre of~$A$.
\end{cor}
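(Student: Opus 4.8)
The plan is to exploit that a \C* is a semisimple SR-algebra, which permits a reduction to primitive quotients; this reduction — unavailable for a general Banach algebra — is the source of the advertised smoothness. Since $S$ leaves every closed ideal of $A$ invariant, for each primitive ideal $P$ of $A$ it induces the elementary operator $\hat S\colon x+P\mapsto\sum_{i=1}^n(a_i+P)(x+P)(b_i+P)$ on the unital primitive \C*~$A/P$, and $\hat S$ is spectrally bounded by Proposition~\ref{prop:sr-quotients} (applied with $B=A$ and $I=J=P$); moreover the associated element $\sum_i(b_i+P)(a_i+P)$ of $A/P$ is $\bm{ba}+P$. As $\rada=\bigcap\{P:P\ \text{primitive}\}=\{0\}$, an element of $A$ is central if and only if its image in every primitive quotient is central, and the centre of a unital primitive \C* equals $\CC I$ (the algebra is prime, so its centre is a commutative \C* without zero divisors, hence one-dimensional). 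Thus it suffices to show: \emph{if $\bar A$ is a unital primitive \C* acting irreducibly on a Hilbert space $E$ and $S=S_{\bm a,\bm b}$ is spectrally bounded on $\bar A$, then $\bm{ba}=\sum_{i=1}^n b_ia_i\in\CC I$.}

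For this I would follow the scheme indicated in the Introduction. Spectral boundedness immediately gives $r(Sx)=0$ whenever $r(x)=0$, so $S$ carries quasi-nilpotents to quasi-nilpotents. Fix a unit vector $\zeta\in E$ and a vector $u\perp\zeta$; the rank-one operator $q\colon v\mapsto\langle v,u\rangle\,\zeta$ then has square zero. The key step is to feed $S$ a \emph{quasi-nilpotent} element $x\in\bar A$ whose action on the finite-dimensional subspace $\spa\{b_ia_j\zeta:1\le i,j\le n\}$ agrees with that of $q$. Granting this, the computation recalled in the Introduction shows that $Sx=\sum_i a_ixb_i$ leaves $V=\spa\{a_1\zeta,\dots,a_n\zeta\}$ invariant, with $Sx\,(a_j\zeta)=\sum_i\langle b_ia_j\zeta,u\rangle\,a_i\zeta$. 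Since $r(x)=0$, $Sx$ is quasi-nilpotent in $\bar A$, hence also as an operator on $E$, so $Sx|_V$ is nilpotent; equating its trace to zero — computed in the basis $a_1\zeta,\dots,a_n\zeta$, which we may assume linearly independent after passing to a representation of $S$ of minimal length (this does not change $\bm{ba}$, by Lemma~\ref{lem:various-repns}, since $\bar A$ is primitive) and choosing $\zeta$ suitably, the finitely many cases $\dim E<n$ being treated separately — yields $\langle\bm{ba}\,\zeta,u\rangle=\sum_i\langle b_ia_i\zeta,u\rangle=0$. As $\zeta$ and $u\perp\zeta$ were arbitrary, $\bm{ba}\,\zeta\in\CC\zeta$ for all $\zeta\in E$, whence $\bm{ba}\in\CC I$; together with the first paragraph this proves the corollary.

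The one substantial difficulty is the construction of the quasi-nilpotent $x\in\bar A$ with the prescribed finite-dimensional action, since Jacobson's density theorem fixes the action of an element of $\bar A$ on finitely many vectors but gives no control over its spectral radius, which is a global invariant. When the socle of $\bar A$ is non-zero this vanishes: then $q$ itself belongs to $\bar A$, is square zero, and $Sq$ is a finite-rank quasi-nilpotent operator whose trace, by cyclicity, equals $\langle\bm{ba}\,\zeta,u\rangle$ and hence is $0$ without further work. In general I would obtain $x$ from the Kadison transitivity theorem together with the weak density of $\bar A$ in $\LE$, and in the hardest case (e.g.\ $\bar A$ simple with zero socle) appeal to the structure theory of locally quasi-nilpotent elementary operators on irreducible operator algebras developed in~\cite{NaMa13}. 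Everything else — the passage to primitive quotients and the linear algebra around the trace — is routine; it is precisely this demand for enough quasi-nilpotent elements that, in the absence of an SR-property and of transitivity, makes the general statement, Proposition~\ref{tr-spb}, considerably more demanding.
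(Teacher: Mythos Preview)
Your first paragraph is correct and coincides with the paper's reduction: the SR-property of \C*s together with Proposition~\ref{prop:sr-quotients} makes each induced operator $S_\varrho$ spectrally bounded on the primitive quotient, and semisimplicity reduces the statement to showing $\sum_i\varrho(b_i)\varrho(a_i)\in\CC I$ in every irreducible representation.

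The trouble is in the primitive case, and it is precisely the point you flag as ``the one substantial difficulty''. Your trace computation is sound, but the construction of a quasi-nilpotent $x\in\bar A$ with prescribed action on $\VSrho\zeta$ is a genuine gap which your suggested remedies do not close. Kadison transitivity yields an element with the right action on finitely many vectors but carries no spectral information whatsoever; weak density of $\bar A$ in $\LE$ is unhelpful because the spectral radius is not weakly (or even norm) lower semicontinuous in the required direction, and an approximant will in any case not act \emph{exactly} as $q$ on $\VSrho\zeta$; and the machinery of~\cite{NaMa13} applies to \emph{locally} quasi-nilpotent elementary operators, i.e.\ those for which $S_\varrho x_{|\LSrho\zeta}$ is nilpotent for \emph{every} $x$ with $x\VSrho\zeta\subseteq\CC\zeta$ --- which is exactly what you would be trying to prove, so the appeal is circular. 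In a simple primitive \C* with zero socle there is no standard result providing nilpotent or quasi-nilpotent elements with a prescribed finite-dimensional action.

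The paper's route is different and sidesteps this obstruction. Its ``slightly simplified arguments like those used in the proof of Proposition~\ref{tr-spb}'' never demand a quasi-nilpotent element. Instead, assuming towards a contradiction that $\sum_i b_ia_i\notin\CC I$, one uses Sinclair's extension of the density theorem to choose \emph{invertible} elements $x_k$ together with a fixed $y$ so that the trace of $S_\varrho(x_k^{}yx_k^{-1})$ on the finite-dimensional invariant subspace $\LSrho\zeta$ grows linearly in~$k$; since $r(x_k^{}yx_k^{-1})=r(y)$ is constant and $S_\varrho$ is spectrally bounded, this trace is bounded by $s\cdot\spn{S_\varrho}\,r(y)$, a contradiction. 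The only simplification the \C*-hypothesis buys over the general Proposition~\ref{tr-spb} is that $S_\varrho$ itself is spectrally bounded, so $x_k$ and $y$ may be chosen directly in $\varrho(A)$ rather than lifted to~$A$. If you want to repair your argument, replacing the elusive quasi-nilpotent $x$ by this conjugation device is the natural fix.
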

\begin{proof}
Let $\varrho$ be an irreducible representation of $A$ on a Hilbert space $H$ and put $P=\ker\varrho$.
As $SP\subseteq P$ and since $A$ is an SR-algebra, by~\cite{Ped76}, see also~\cite{MW79},
the induced elementary operator $S_\varrho$ defined by $S_\varrho\circ\varrho=\varrho\circ S$ is
spectrally bounded with $\spn{S_\varrho}\leq\spn S$ by Proposition~\ref{prop:sr-quotients} above.
As a result, we can assume that $A$ acts irreducibly on~$H$ and aim to show that $\sum_{i=1}^n b_ia_i\in\CC1$.
Slightly simplified arguments like those used in the proof of Proposition~\ref{tr-spb} (which we do not spell out here
to save space) allow us to arrive at the desired conclusion.
\end{proof}

For a unital Banach algebra $A$, let $\mathcal Z(A)$ denote the centre modulo the radical, that is, the inverse image
of the centre of $A/\rada$ under the canonical epimorphism. The next proposition determines when the basic block
of an elementary operator, the two-sided multiplication is a homomorphism.
\begin{prop}\label{mult-hom}
Let $A$ be a unital Banach algebra, let $u,v\in A$ and put $e=vu$. The two-sided multiplication $M_{u,v}$ is a homomorphism
modulo $\rada$ if and only if $e$ is an idempotent in $\mathcal Z (A)$ and $M_{u,v}(1-e)A\subseteq\rada$.
\end{prop}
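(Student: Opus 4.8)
Write $e=vu$, so that $M_{u,v}(xy)=uxyv$ while $M_{u,v}(x)M_{u,v}(y)=ux(vu)yv=uxeyv$ for all $x,y\in A$. Hence $M_{u,v}$ is a homomorphism modulo~$\rada$ if and only if
\begin{equation*}
ux(1-e)yv\in\rada\qquad(x,y\in A).\tag{$\star$}
\end{equation*}
In this form the sufficiency is a short computation: if the image $\bar e$ of $e$ in $A/\rada$ is a central idempotent and $M_{u,v}\bigl((1-e)A\bigr)\subseteq\rada$, then $x(1-e)-(1-e)x=ex-xe\in\rada$, and since $\rada$ is an ideal we get $ux(1-e)yv\equiv u(1-e)(xy)v\pmod{\rada}$, the right-hand side lying in $M_{u,v}\bigl((1-e)A\bigr)\subseteq\rada$; so $(\star)$ holds.

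For the necessity I would pass to the primitive quotients of $A$. An elementary operator leaves every ideal of $A$ invariant and $\rada$ is the intersection of the primitive ideals of $A$, so $(\star)$ holds if and only if, for each primitive ideal $P$, the induced two-sided multiplication $M_{\bar u,\bar v}$ on the primitive (hence prime) unital algebra $A/P$ is a homomorphism. The crucial step is the observation that \emph{on a prime unital algebra, $M_{u,v}$ can be a homomorphism only if $u=0$, or $v=0$, or $vu=1$}: the homomorphism identity is exactly $uA(1-e)Av=0$, and, assuming $u\ne0\ne v$, primeness applied to $\bigl(ua(1-e)\bigr)Av=0$ for each $a\in A$ gives $uA(1-e)=0$; then the nonzero two-sided ideal $AuA$ satisfies $AuA\,(1-e)=0$, which, by primeness again, forces $1-e=0$.

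Assembling the pieces: in each primitive quotient $A/P$ the image $\bar e$ of $e=vu$ equals $0$ (when $\bar u=0$ or $\bar v=0$) or $1$ (otherwise), so in all cases $\bar e$ is a central idempotent of $A/P$, and $u(1-e)xv\in P$ for every $x\in A$ --- trivially when $\bar e=1$, and because $\bar u=0$ or $\bar v=0$ when $\bar e=0$. Intersecting over all primitive ideals yields $e^2-e\in\rada$ and $[e,A]\subseteq\rada$, that is, $e$ is an idempotent in $\mathcal Z(A)$, together with $M_{u,v}\bigl((1-e)A\bigr)\subseteq\rada$, as required. I expect the one point that really needs care to be the \emph{centrality} of $e$ modulo~$\rada$: a direct computation inside $A/\rada$ does give that $e$ is idempotent modulo $\rada$, but not that it is central, and the descent to the primitive quotients is exactly what removes this obstacle, since there $e$ is pinned down to $0$ or $1$.
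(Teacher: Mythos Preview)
Your argument is correct. The sufficiency direction is essentially the same computation as the paper's, but for the necessity you take a genuinely different (and shorter) route.

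The paper passes to an irreducible representation $\varrho$ of $A$ on a Banach space $X$ and uses Jacobson density: assuming $\{\varrho(vu)\zeta,\zeta\}$ were linearly independent for some $\zeta\in X$, it manufactures specific $x,y\in A$ with prescribed values on $\zeta$ and $\varrho(vu)\zeta$ that violate the homomorphism identity, thereby forcing $\varrho(vu)\in\CC I$; a second substitution then pins the scalar to $\{0,1\}$. This is in keeping with the density-and-separating-vector machinery used throughout the paper. You instead observe that a primitive quotient is a prime ring and apply the defining property $aRb=0\Rightarrow a=0$ or $b=0$ directly to the identity $uA(1-e)Av=0$, yielding $\bar u=0$, $\bar v=0$, or $\bar e=1$ in one stroke. (Incidentally, the detour through $AuA$ is not needed: once $uA(1-e)=0$ and $u\ne0$, primeness already gives $1-e=0$.) Your approach is purely ring-theoretic and would prove the same statement for an arbitrary unital ring with its Jacobson radical; the paper's approach trades this generality for consistency with the techniques it needs later.
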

\begin{proof}
To establish the ``only if''-part, let $\varrho$ be an irreducible representation of $A$ on a Banach space~$X$.
Suppose that there exists $\zeta \in X$ such that $\{ \varrho(vu) \zeta, \zeta\}$ is linearly independent. Choose $x,y \in A$ such that
\begin{equation*}
\varrho (xvu) \zeta=0,\;  \varrho (x) \zeta= \zeta\  \text {and } \varrho(yvu) \zeta= \zeta.
\end{equation*}
Then
\begin{equation*}
\varrho (M_{u,v} (xy) u) \zeta= \varrho (u) \zeta\text{ and }\varrho((M_{u,v} (x) M_{u,v} (y))u) \zeta=0,
\end{equation*}
a contradiction. Consequently, $\{ \varrho(vu) \zeta, \zeta\}$ is linearly dependent for every $\zeta \in X$. As a result, $\varrho (vu) \in \CC I$,
say $\varrho (vu)=\lambda I$. In order to show that  $\lambda\in \{0,1\}$, let $x,y\in A$ be arbitrary. Then, by hypothesis,
\begin{equation}\label{eq:homo}
\varrho(uxyv)=\varrho(uxv\,uyv)\quad{}\Longrightarrow{}\quad\varrho((1-\lambda)\,uxyv)=0.
\end{equation}
Specialising to $x=y=1$ and multiplying on the left with~$v$ and on the right with~$u$ we find that $(1-\lambda)\lambda^2=0$
which yields the claim. We conclude that $e$ is an idempotent in~$\mathcal Z(A)$.

Moreover, by~\eqref{eq:homo}, $\varrho(u(1-e)xv)=\varrho((1-\lambda)\,uxv)=0$ for all $x\in A$ which entails that
$M_{u,v}(1-e)A\subseteq\rada$.

Conversely, the hypotheses on $e$ and $M_{u,v}$ imply that
\[
uxyv+\rada=uxeyv+\rada +ux(1-e)yv+\rada=uxv\,uyv+\rada\quad(x,y\in A)
\]
which is the desired assertion.
\end{proof}

It follows from the above proposition that, if $u,v$ are elements of a unital semisimple Banach algebra~$A$ and $e=vu$ is a central
idempotent in $A$, then $M_{u,v}$ can be decomposed as
\[
M_{u,v}=M_{u_1,v_1}+M_{u_2,v_2}
\]
where $M_{u_1,v_1}$ is a homomorphism on~$A$ and $M_{u_2,v_2}(eA)=0$. This is easily verified by putting
$u_1=eu$, $v_1=ev$, $u_2=(1-e)u$ and $v_2=(1-e)v$.

The next result shows how to build a new spectrally bounded operator out of a number of given ones;
it generalises \cite[Lemma~2.1]{NaMa11}.
\begin{prop}\label{prop:spb-hom}
Let\/ $A$ be a  unital Banach algebra, and let\/ $T_1,\ldots, T_n$  be linear mappings on $A$ such that, for each~$i$,
$T_i$ is a homomorphism or $(T_i x)^2 \in\rada$ for every $x\in A$.
Suppose that $(T_i y) (T_jx)\in\rada$ for every $x, y\in A$ and for all $i>j$.
Then, for all\/ $\lambda_1, \ldots,  \lambda_n \in\CC$, the mapping\/ $T=\sum_{i=1}^n \lambda_i T_i$ is spectrally bounded.
\end{prop}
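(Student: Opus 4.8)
The plan is to estimate $r(Tx)$ for a fixed $x\in A$ directly from the spectral radius formula, working modulo the Jacobson radical. Since an element of a unital Banach algebra is invertible precisely when its image in $\bar A:=A/\rada$ is (because $1+\rada\subseteq\invA$), we have $r(Tx)=r(\overline{Tx})$, where the bar denotes the canonical image in $\bar A$; moreover the hypotheses descend to $\bar A$. Writing $u_i=\lambda_iT_ix$, the images satisfy $\bar u_i\bar u_j=0$ for all $i>j$ — this is the assumption $(T_iy)(T_jx)\in\rada$ in the case $y=x$, which is all that is needed here since evaluating $T$ at a single element only produces products among $T_1x,\dots,T_nx$ — and for each $i$ the map $T_i$ is a homomorphism or $\bar u_i^2=0$.

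The first ingredient I would establish is the uniform per-coefficient estimate $r(T_ix)\le r(x)$. If $(T_ia)^2\in\rada$ for all $a$, then $r(T_ix)^2=r\bigl((T_ix)^2\bigr)=0$, since radical elements are quasinilpotent. If $T_i$ is a homomorphism, put $p=T_i(1)$; then $p$ is an idempotent, $T_i$ maps $A$ unitally into the corner algebra $pAp$, so $\sigma(T_ix)\subseteq\sigma_{pAp}(T_ix)\cup\{0\}\subseteq\sigma(x)\cup\{0\}$, whence again $r(T_ix)\le r(x)$. Consequently, fixing $\varepsilon>0$, the spectral radius formula provides a constant $C=C(x,\varepsilon)\ge1$ with $\|(T_ix)^k\|\le C\,(r(x)+\varepsilon)^k$ for all $i$ and all $k\ge1$.

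Next I would expand
\[
(\overline{Tx})^m=\Bigl(\sum_{i=1}^n\bar u_i\Bigr)^m=\sum\bar u_{i_1}\bar u_{i_2}\cdots\bar u_{i_m},
\]
the sum running over all $(i_1,\dots,i_m)\in\{1,\dots,n\}^m$, and use $\bar u_i\bar u_j=0$ for $i>j$ to discard every monomial whose index sequence is not non-decreasing. A surviving monomial, after grouping equal consecutive indices, has the form $\bar u_{j_1}^{m_1}\cdots\bar u_{j_r}^{m_r}$ with $j_1<\cdots<j_r$, $m_s\ge1$ and $\sum_sm_s=m$, so its norm is at most $\prod_s|\lambda_{j_s}|^{m_s}\|(T_{j_s}x)^{m_s}\|\le C^n\bigl(\max_i|\lambda_i|\bigr)^m(r(x)+\varepsilon)^m$. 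Since there are only $\binom{m+n-1}{n-1}$ non-decreasing sequences of length $m$ in $\{1,\dots,n\}$, which grows only polynomially in $m$, this yields $\|(\overline{Tx})^m\|\le\binom{m+n-1}{n-1}C^n(\max_i|\lambda_i|)^m(r(x)+\varepsilon)^m$. Taking $m$-th roots, letting $m\to\infty$ and then $\varepsilon\to0$ gives $r(Tx)=r(\overline{Tx})\le(\max_i|\lambda_i|)\,r(x)$, so $T$ is spectrally bounded with $\spn T\le\max_i|\lambda_i|$.

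I expect the main obstacle to be the combinatorial bookkeeping in the last step: one must argue carefully that, modulo $\rada$, exactly the non-decreasing monomials survive and that each of them is controlled by the uniform decay $\|(T_{j}x)^{m_s}\|\le C(r(x)+\varepsilon)^{m_s}$. This is precisely where the dichotomy ``$T_i$ a homomorphism or $(T_ix)^2\in\rada$'' enters — it is exactly what forces $r(T_ix)\le r(x)$ for every coefficient and hence makes the \emph{sum} spectrally bounded, even though an arbitrary sum of spectrally bounded operators need not be.
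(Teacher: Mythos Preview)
Your proof is correct and takes a genuinely different route from the paper's. The paper argues by constructing, for each nonzero $\alpha\notin\bigcup_t\sigma(\lambda_tx)$ (the union taken only over those $t$ for which $T_t$ is a homomorphism), an explicit left inverse of $\alpha-Tx$ modulo $\rada$: with the resolvent-type elements $y_t=\frac{\lambda_t}{\alpha}x\bigl(\frac{\lambda_t}{\alpha}x-1\bigr)^{-1}$ and the homomorphism identity one checks that $\bigl(1-\sum_tT_ty_t\bigr)\bigl(1-\tfrac{1}{\alpha}Tx\bigr)=1+q$ with $q\in\rada$, whence $\alpha-Tx$ is left invertible and $\partial\sigma(Tx)\subseteq\bigcup_t\sigma(\lambda_tx)$. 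This yields the qualitatively stronger spectral inclusion (and a bound involving only the homomorphism coefficients), but it relies on an inductive verification that $q\in\rada$ imported from a companion paper and uses the full two-variable hypothesis $(T_iy)(T_jx)\in\rada$. Your argument is instead a direct Gelfand-formula estimate exploiting the ``upper-triangular'' relations $\bar u_i\bar u_j=0$ for $i>j$ in $A/\rada$: it is more elementary, entirely self-contained, and---as you observe---needs only the one-variable consequence $(T_ix)(T_jx)\in\rada$. The cost is a slightly coarser constant $\max_i|\lambda_i|$ over all~$i$ and no information about $\sigma(Tx)$ beyond its radius; however, noting that $\bar u_i^2=0$ forces $m_s\le1$ for the non-homomorphism indices, your method recovers the paper's sharper bound as well after taking $m$-th roots.
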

\begin{proof}
Suppose that for $1\leq t\leq r$, $T_{i_t}$ is a homomorphism and for $r+1\leq t\leq n$, $(T_{i_t} x)^2\in\rada$ for all $x\in A$.
Let $\lambda_1, \ldots,\lambda_n \in\CC$ and suppose, without loss of generality, that $\lambda_{i_t}=1$ for $r+1\leq t\leq n$.
Put $\lambda_t:=\lambda_{i_t}$ for $1\leq t\leq r$ and fix $x\in A$.
Choose a non-zero complex number $\alpha\not\in\bigcup_{t=1}^r \sigma(\lambda_t x)$.
Fix $1\leq t\leq r$ and put  $y_t=\frac{\lambda_t}{\alpha}x\bigl(\frac{\lambda_t}{\alpha}x-1\bigr)^{-1}$. Then
\begin{equation*}
\frac{\lambda_t}{\alpha}\,x +y_t-y_t \frac{\lambda_t}{\alpha}\,x=0
\end{equation*}
and thus
\begin{equation*}
\frac{\lambda_t}{\alpha}\,T_{i_t}x +T_{i_t}y_t - (T_{i_t}y_t)\frac{\lambda_t}{\alpha}\,T_{i_t}x=0.
\end{equation*}
We compute that
\begin{equation}\label{eq:first}
\bigl(1- \sum_{t=1}^r T_{i_t}y_t\bigr) \bigl(1- \sum_{t=1}^r\frac{\lambda_t}{\alpha}\,T_{i_t}x-\sum_{t=r+1}^n\frac{1}{\alpha}T_{i_t}x\bigr)
= 1+q,
\end{equation}
where
\be
q=\sum_{t,s=1,\, t\neq s}^r \frac{\lambda_s}{\alpha} \, (T_{i_t}y_t)(T_{i_s}x)
     - \sum_{t=r+1}^n \frac{1}{\alpha}T_{i_t}x + \sum_{t=1}^r\sum_{s=r+1}^n \frac{1}{\alpha}(T_{i_t}y_t)(T_{i_s}x).
\ee
As in the proof of Proposition~2.3 in~\cite{NaMa13}, we show by induction on $n$ that $q\in\rada$.
This implies that $1+q$ is invertible which, together with
identity~\eqref{eq:first}, entails that $\alpha-\bigl(\sum_{t=1}^r\lambda_t T_{i_t}+\sum_{t=r+1}^nT_{i_t}\bigr) (x)$ is left invertible.
Since the boundary of the spectrum of $Tx$ is contained in the left approximate point spectrum, it follows
that $ \alpha\notin\partial\sigma(Tx)$  and thus
\[
\partial\sigma (Tx) \subseteq \bigcup_{t=1}^r \sigma(\lambda_tx) .
\]
This implies that $r(Tx)\leq\max\limits_{1\leq i\leq n}\{|\lambda_i|\}\,r(x)$ for each $x\in A$. The  proof is complete.
\end{proof}

As a consequence we obtain a sufficient criterion for spectral boundedness of an elementary operator.
\begin{cor}\label{cor:sufficient-cond}
Let\/ $A$ be a  unital Banach algebra, and let\/ $S\in\ElnA$. If\/ $S$ can be written as\/ $S=\sum_{i=1}^nM_{u_i,v_i}$
with\/ $v_iu_j\in\rada$ for all $i>j$ and\/ $v_iu_i\in{\mathcal Z(A)}$ for all~$i$ then\/ $S$ is spectrally bounded.
\end{cor}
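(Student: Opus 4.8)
The plan is to read the required estimate directly off the ``triangular'' shape of the coefficients, using Jacobson's lemma relating the spectra of $ab$ and $ba$. Since $\sigma_A(a)=\sigma_{A/\rada}(a+\rada)$ and adding an element of~$\rada$ leaves the spectrum unchanged, I may work modulo the radical; accordingly set $z_i=v_iu_i$, so that $\overline{z_i}:=z_i+\rada$ is central in $A/\rada$, while $v_iu_j\in\rada$ whenever $i>j$. Fix $x\in A$; the goal is to bound $r(Sx)=r\bigl(\sum_{i=1}^n u_ixv_i\bigr)$.

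First I would pass to matrices over~$A$. Write $U$ for the $1\times n$ matrix with entries $u_1x,\dots,u_nx$ and $V$ for the $n\times1$ matrix with entries $v_1,\dots,v_n$, so that $Sx=UV$. Padding $U$ and~$V$ with zeros to matrices in $M_n(A)$ and applying $\sigma(ab)\cup\{0\}=\sigma(ba)\cup\{0\}$ inside $M_n(A)$ yields
\[
\sigma_A(Sx)\cup\{0\}=\sigma_{M_n(A)}(W)\cup\{0\},\qquad W:=\bigl(v_iu_jx\bigr)_{1\le i,j\le n},
\]
and hence $r(Sx)=r_{M_n(A)}(W)$. Next, decompose $W=M_0+R$, where $M_0$ has $(i,j)$-entry $v_iu_jx$ for $i\le j$ and $0$ for $i>j$, while $R$ carries the strictly lower entries. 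By hypothesis every entry of~$R$ lies in~$\rada$, so $R$ lies in $M_n(\rada)$, which is contained in the Jacobson radical of $M_n(A)$; consequently $\sigma(W)=\sigma(M_0)$ and $r(W)=r(M_0)$.

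It remains to estimate $r(M_0)$. The matrix $M_0$ is upper triangular over~$A$ with diagonal entries $v_iu_ix=z_ix$, and a standard back-substitution (induction on~$n$ through the $2\times2$ block decomposition) shows that $\lambda I-M_0$ is invertible in $M_n(A)$ as soon as every $\lambda-z_ix$ is invertible in~$A$. Therefore $\sigma(M_0)\subseteq\bigcup_{i=1}^n\sigma_A(z_ix)$, so $r(M_0)\le\max_{1\le i\le n}r(z_ix)$. Finally, since $\overline{z_i}$ is central in $A/\rada$ one has $r(z_ix)=r(\overline{z_i}\,\overline{x})\le r(\overline{z_i})\,r(\overline{x})=r(v_iu_i)\,r(x)$, and combining the last three displays gives
\[
r(Sx)\le\Bigl(\max_{1\le i\le n}r(v_iu_i)\Bigr)r(x)\qquad(x\in A).
\]
Thus $S$ is spectrally bounded, with $\spn S\le\max_{1\le i\le n}r(v_iu_i)$.

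The passage to $A/\rada$ and the matrix bookkeeping are routine; the step I expect to carry the weight is the middle one, namely the observation that the two hypotheses have been calibrated precisely so that the matrix~$W$ delivered by Jacobson's lemma becomes, modulo the radical, an upper-triangular matrix with \emph{central} diagonal --- a situation in which the spectral radius is dominated by that of the diagonal entries. (The statement can also be deduced from Proposition~\ref{prop:spb-hom}; the argument above has the merit of exhibiting the explicit bound and of showing transparently where each hypothesis is used.)
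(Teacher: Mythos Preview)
Your argument is correct and takes a genuinely different route from the paper's. The paper deduces the corollary from Proposition~\ref{prop:spb-hom}: for each irreducible representation~$\varrho$ it verifies that the operators $T_i=M_{\varrho(u_i),\varrho(v_i)}$ satisfy the hypotheses of that proposition (each $T_i$ is either a scalar multiple of a homomorphism or has square zero, and $T_iT_j=0$ for $i>j$), obtains a uniform bound $\spn{S_\varrho}\leq\max_i|\lambda_i|$ in each representation, and then passes back to~$A$ via $r(Sx)=\sup_\varrho r(S_\varrho\varrho(x))$. Your approach bypasses both the representation theory and Proposition~\ref{prop:spb-hom} entirely: Jacobson's lemma in $M_n(A)$ converts $r(Sx)$ into the spectral radius of the $n\times n$ matrix $W=(v_iu_jx)$, the hypothesis $v_iu_j\in\rada$ for $i>j$ makes $W$ upper triangular modulo $M_n(\rada)=\textup{rad}\,M_n(A)$, and the nilpotency of strictly upper-triangular matrices over any ring yields $\sigma(M_0)\subseteq\bigcup_i\sigma_A(v_iu_ix)$. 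The centrality hypothesis then enters only at the very last step, through the submultiplicativity of~$r$ on commuting elements. Your route is more self-contained and yields the sharper constant $\max_ir(v_iu_i)$ in place of the paper's $1+\max_i\|v_iu_i\|$ (the ``$+1$'' there is an artefact of how Proposition~\ref{prop:spb-hom} handles the nilpotent summands). The paper's approach, on the other hand, fits the corollary into the broader framework of Proposition~\ref{prop:spb-hom}, which covers mixtures of homomorphisms and square-zero maps that need not arise as two-sided multiplications.
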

\begin{proof}
Let $\varrho$ be an irreducible representation of $A$; then $S_\varrho\in\El{\varrho(A)}$ is given by
$S_\varrho=\sum_{i=1}^nM_{\varrho(u_i),\varrho(v_i)}$. Since, for each~$i$, $\varrho(v_i)\varrho(u_i)=\mu_i1\in\CC1$,
the two-sided multiplication $M_{\varrho(u_i),\varrho(v_i)}$ is spectrally bounded with
$\|M_{\varrho(u_i),\varrho(v_i)}\|_\sigma\leq|\mu_i|$ (see the first paragraph of Section~\ref{sect:spbdd}).
If $\mu_i=0$ then $\bigl(M_{\varrho(u_i),\varrho(v_i)}\varrho(x)\bigr)^2=0$
for all $x\in A$. Otherwise, $\mu_i^{-1}M_{\varrho(u_i),\varrho(v_i)}$ is a homomorphism.
Setting $T_i=M_{\varrho(u_i),\varrho(v_i)}$, $1\leq i\leq n$ the hypothesis $\varrho(v_i)\varrho(u_j)=0$, $i>j$
yields $T_i\varrho(y)\,T_j\varrho(x)=0$ for all $x,y\in A$ and $i>j$. Thus we can apply Proposition~\ref{prop:spb-hom} to obtain
that $S_\varrho=\sum_{i=1}^n\lambda_iT_i$ is spectrally bounded with
$\|S_\varrho\|_\sigma\leq\smash{\max\limits_{1\leq i\leq n}}|\lambda_i|\vphantom{I_{I_I}}$,
where $\lambda_i=1$ if $\mu_i=0$ and $\lambda_i=\mu_i$ otherwise.

Put $\gamma=1+\smash{\max\limits_{1\leq i\leq n}}\|v_iu_i\|$. Then, for all $x\in A$,
$r(Sx)=\sup_\varrho r(S_\varrho\varrho(x))\leq\gamma\, r(x)$, compare \cite[p.~4]{NaMa11}.
As a result, $S$ is spectrally bounded.
\end{proof}

Let $V$  be a finite-dimensional subspace of~$L(X)$, the space of all linear mappings on a vector space~$X$.
Recall that $\lDim V= \max \{ \Dim V \zeta: \zeta \in X \}$ is the \textit{local dimension of}~$V$, and
that $V$ (or, equivalently, a basis of~$V$) is said to be \textit{locally linearly dependent\/}  if $\lDim V < \Dim V$.
In the case that $\lDim V= \Dim V$, any vector satisfying $\Dim V \zeta=\Dim V$ is called a \textit{separating vector of}~$V$.

The following lemma follows from \cite[Lemma 2.1]{BrSe}; see also~\cite{GoLaWo}. It will be used at
various instances in Section~\ref{sect:spbdd}.

\begin{lem}\label{free}
Let $X$ be a vector space and let $V_1, \ldots, V_k$ be finite-dimensional subspaces of $L (X)$.
Then there exists $\zeta \in X$ such that $\Dim V_i \zeta= \lDim V_i$ for all $1 \leq i \leq k$.
\end{lem}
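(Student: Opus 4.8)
The plan is to proceed by induction on~$k$, the crucial point being that the integer-valued function $\zeta\mapsto\Dim V_i\zeta$ cannot drop below a value it already attains when $\zeta$ is moved along a generic affine line, because the relevant minors are polynomials in the line parameter and $\CC$ is infinite.

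For $k=1$ there is nothing to prove: by definition $\lDim V_1$ is the maximum of $\Dim V_1\zeta$ over $\zeta\in X$, and this maximum is attained as it is bounded above by $\Dim V_1$. For the inductive step, assume we have $\zeta\in X$ with $\Dim V_i\zeta=\lDim V_i$ for $1\le i\le k-1$, and pick $\eta\in X$ with $\Dim V_k\eta=\lDim V_k$. I would then show that $\zeta+\lambda\eta$ works for all but finitely many $\lambda\in\CC$. To set this up, fix~$i$ and a basis $f_1,\ldots,f_m$ of~$V_i$; all the vectors $f_t\zeta$ and $f_t\eta$ lie in the finite-dimensional subspace $Z_i=\spa\{f_t\zeta,f_t\eta:1\le t\le m\}$ of~$X$, so after choosing a basis of~$Z_i$ the vectors $f_t(\zeta+\lambda\eta)=f_t\zeta+\lambda f_t\eta$ acquire coordinates that are affine functions of~$\lambda$, and $\Dim V_i(\zeta+\lambda\eta)$ equals the rank of the resulting matrix $M_i(\lambda)$ over~$\CC$. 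Every $d\times d$ minor of $M_i(\lambda)$ is thus a polynomial in~$\lambda$ of degree at most~$d$.

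For $1\le i\le k-1$ we have $\Dim V_i\zeta=\lDim V_i=:d_i$, so some $d_i\times d_i$ minor of $M_i(0)$ is non-zero; hence the corresponding minor of $M_i(\lambda)$ is a non-zero polynomial in~$\lambda$ and vanishes only for finitely many~$\lambda$. For the remaining~$\lambda$ we get $\Dim V_i(\zeta+\lambda\eta)\ge d_i$, and since the reverse inequality always holds, equality follows. For $i=k$ I would instead exploit that for $\lambda\ne0$ one has $\Dim V_k(\zeta+\lambda\eta)=\Dim V_k(\lambda^{-1}\zeta+\eta)$: writing $\mu=\lambda^{-1}$ and running the same argument with the roles of $\zeta$ and $\eta$ interchanged, the relevant minor is a non-zero polynomial in~$\mu$ (non-zero at $\mu=0$ because $\Dim V_k\eta=\lDim V_k$), hence vanishes for only finitely many~$\mu$, equivalently for only finitely many~$\lambda\ne0$. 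Discarding the finitely many forbidden values of~$\lambda$ (together with $\lambda=0$, forbidden by the index~$k$) and choosing any remaining $\lambda\in\CC$ — possible since $\CC$ is infinite — the vector $\zeta+\lambda\eta$ satisfies $\Dim V_i(\zeta+\lambda\eta)=\lDim V_i$ for all $1\le i\le k$, completing the induction.

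The only step requiring care is the bookkeeping that collects the vectors $f_t\zeta,f_t\eta$ into a single finite-dimensional subspace, so that ``minor'' makes sense and is genuinely a polynomial in the parameter; after that everything reduces to the elementary fact that a non-zero polynomial over~$\CC$ has finitely many roots. Alternatively one can bypass the induction: choose $\zeta_i$ with $\Dim V_i\zeta_i=\lDim V_i$ for each~$i$ and seek $\zeta=\sum_{j=1}^k\lambda_j\zeta_j$; for each~$i$ some $d_i\times d_i$ minor of the associated matrix is a polynomial in $(\lambda_1,\ldots,\lambda_k)$ that is non-zero at the point $\lambda_i=1$, $\lambda_j=0$ ($j\ne i$), hence not identically zero, and the product over~$i$ of these $k$ polynomials is a non-zero polynomial on~$\CC^k$, so it is non-zero at some point, yielding a simultaneously good~$\zeta$.
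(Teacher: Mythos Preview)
Your argument is correct. The inductive step via an affine line, together with the observation that the relevant minors are polynomials in the line parameter and hence have only finitely many zeros, is watertight; the substitution $\mu=\lambda^{-1}$ to handle the index~$k$ is the right way to anchor the argument at~$\eta$ rather than at~$\zeta$. The alternative multi-variable polynomial argument you sketch at the end is equally valid.

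As for comparison with the paper: the paper does not actually supply a proof of this lemma but simply records it as a consequence of \cite[Lemma~2.1]{BrSe} (with a further pointer to~\cite{GoLaWo}). That cited lemma is essentially the single-subspace case---the existence of a separating vector for one finite-dimensional space of operators---and the extension to finitely many subspaces proceeds along the same lines you outline. So your proof is not a different route so much as a fully self-contained version of the argument the paper outsources to the literature; what it buys is that a reader need not chase the reference.
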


\section{Spectrally bounded elementary operators}\label{sect:spbdd}

\noindent
Probably the first result on spectrally bounded elementary operators is Pt\'ak's theorem \cite[Proposition~2.1]{Pta}
showing that $L_a$ (equivalently, $R_a$) is spectrally bounded if and only if $a\in\mathcal Z(A)$.
It follows immediately from this that $M_{a,b}$ is spectrally bounded if and only if $ba\in\mathcal Z(A)$.
It was shown in \cite[Theorem~B]{CuMa} that $L_a-R_b$ is spectrally bounded if and only if both $a$ and $b$
belong to~$\mathcal Z(A)$; however, the condition $a-b\in\mathcal Z(A)$ is no longer sufficient.
If a length two elementary operator $M_{a,b}+M_{c,d}$ is spectrally bounded then $ba+dc\in\mathcal Z(A)$,
by \cite[Theorem~3.5]{NaMa11}; the latter theorem also gives a sufficient condition for spectral boundedness
of $M_{a,b}+M_{c,d}$. We will extend the necessary condition to arbitrary elementary operators in Proposition~\ref{tr-spb}
below; it seems difficult to give concise necessary \textit{and\/} sufficient conditions in general though.

We begin our discussion in this section by looking at spectrally infinitesimal elementary operators.

\begin{prop}\label{prop:infini_elops}
Let $A$ be a unital Banach algebra, and let $S=S_{\bm a,\bm b}$ be an elementary operator on~$A$.
Suppose that $r( Sx)=0$ for every  $x \in A$. Then $\bm b\bm a\in\rada$.
\end{prop}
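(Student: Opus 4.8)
The plan is to reduce to the case where $A$ acts irreducibly on a Banach space and then argue entirely within that representation, exploiting the finite-dimensional invariant subspaces highlighted in the introduction. Since $S$ leaves every primitive ideal $P$ invariant, the induced operator $S_\varrho$ on $\varrho(A)$ satisfies $S_\varrho\circ\varrho=\varrho\circ S$, and spectral infinitesimality passes down: $r(\varrho(Sx))\le r(Sx)=0$, so $S_\varrho$ is spectrally infinitesimal on the dense algebra $\varrho(A)\subseteq\LE$. As $\bm{ba}\in\rada$ is equivalent to $\varrho(\bm{ba})=0$ for every irreducible $\varrho$, it suffices to show $\sum_{i=1}^n\varrho(b_ia_j)=0$—actually to show $\sum_i\varrho(b_i)\varrho(a_i)$ annihilates $E$. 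So from now on I work with a dense subalgebra $B\subseteq\LE$, elements $a_i,b_i\in B$, and assume $r(Sx)=0$ for all $x\in B$.

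Next I would bring in the local-linear-algebra machinery. Fix $\zeta\in E$; by Jacobson density and Lemma~\ref{free} I can choose $\zeta$ so that the spaces $\spa\{a_1\zeta,\ldots,a_n\zeta\}$ and $\spa\{b_ia_j\zeta\}$ have maximal dimension. The key observation, already sketched in the introduction, is that whenever $x\in B$ satisfies $x\,b_ia_j\zeta\in\CC\zeta$ for all $i,j$, the finite-dimensional subspace $F=\spa\{a_1\zeta,\ldots,a_n\zeta\}$ is invariant under $Sx$, and the restriction $(Sx)|_F$ is nilpotent because $r(Sx)=0$ and on a finite-dimensional space the spectrum of a restriction to an invariant subspace is contained in the spectrum of the whole operator—wait, more carefully: $r((Sx)|_F)\le r(Sx)=0$ requires knowing the spectral radius of a compression, which holds since $(Sx)|_F$ is literally a restriction to an invariant subspace, so its eigenvalues are eigenvalues of $Sx$, hence have modulus $\le r(Sx)=0$. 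Thus $\Tr\bigl((Sx)|_F\bigr)=0$, i.e. $\sum_i\langle a_i\zeta$-coordinate of $x b_i a_i\zeta\rangle$ vanishes in an appropriate sense; unwinding, this says that the scalar by which $x$ acts, applied through $b_ia_i$, sums to zero.

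The cleanest way to extract $\bm{ba}=0$ from this is to specialise the auxiliary $x$. By $n$-transitivity pick $x\in B$ with $x(b_ia_j\zeta)=0$ whenever $b_ia_j\zeta$ is linearly independent from $\zeta$, and $x\zeta=\zeta$; then $xb_ia_j\zeta=\mu_{ij}\zeta$ where $\mu_{ij}$ is the $\zeta$-coefficient of $b_ia_j\zeta$. Now $(Sx)\,a_j\zeta=\sum_i a_i\,x\,b_ia_j\zeta=\sum_i\mu_{ij}\,a_i\zeta$, so $(Sx)|_F$ is represented, relative to a spanning set $\{a_j\zeta\}$ of $F$, by the matrix $(\mu_{ij})$ (after passing to a basis). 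Nilpotency forces $\Tr(\mu_{ij})=\sum_i\mu_{ii}=0$. But $\sum_i\mu_{ii}\zeta=\sum_i x\,b_ia_i\zeta=x\bigl(\bm{ba}\,\zeta\bigr)$, and since $x$ can be chosen to act as the identity on $\spa\{\zeta,\bm{ba}\,\zeta\}$ when this is where $\bm{ba}\,\zeta$ lives—more robustly, one shows $\bm{ba}\,\zeta\in\CC\zeta$ first (by a separate application of the same nilpotency idea to $1$-dimensional data, or by noting the argument already forces each relevant compression to be nilpotent) and then that the scalar is $0$. Running this for every $\zeta\in E$ gives $\bm{ba}=0$ in $B$, hence $\bm{ba}\in\rada$ in $A$.

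The main obstacle I anticipate is the bookkeeping in the last step: making the choice of the auxiliary element $x$ simultaneously compatible with (a) killing all the "off-$\CC\zeta$" parts of the vectors $b_ia_j\zeta$, (b) acting as the identity on $\zeta$, and (c) keeping $F$ genuinely invariant so that the trace computation is legitimate—Lemma~\ref{free} and Jacobson density together should handle this, but one must be careful that the $\mu_{ij}$ depend on $\zeta$ and that the conclusion $\sum_i\mu_{ii}=0$ must be leveraged for a well-chosen family of $\zeta$'s to conclude $\bm{ba}\,\zeta=0$ identically rather than just $\langle\text{one coordinate}\rangle=0$. I expect the actual paper packages this via a short lemma (perhaps the forthcoming Lemma~\ref{spb-nilp}) about nilpotency of such compressions, and that the trace identity, applied to a generic separating vector, immediately yields $\bm{ba}=0$; the verification that "generic $\zeta$ suffices" is the one genuinely delicate point.
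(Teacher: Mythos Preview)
Your reduction to an irreducible representation, the identification of the invariant subspace $F=\LSrho\zeta$, and the observation that $(S_\varrho\varrho(x))_{|F}$ is nilpotent whenever $\varrho(x)\VSrho\zeta\subseteq\CC\zeta$ are exactly what the paper does. At that point, however, the paper simply observes that these are precisely the hypotheses of \cite[Proposition~3.1]{NaMa13} and invokes that result to conclude $\varrho(\bm{ba})=0$. You are effectively trying to reprove that proposition inline, which is fine, but your closure of the argument has a gap.

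The gap is in how you pass from $\Tr((S_\varrho\varrho(x))_{|F})=0$ to $\bm{ba}\,\zeta=0$. You fix one particular $x$ (with $\varrho(x)\zeta=\zeta$ and $\varrho(x)$ annihilating a complement of $\CC\zeta$ in $\VSrho\zeta$), obtain a single scalar equation, and then propose to vary~$\zeta$; you even anticipate having to first prove $\bm{ba}\,\zeta\in\CC\zeta$ separately. This is the wrong variable to vary, and the detour through ``$\bm{ba}\,\zeta\in\CC\zeta$ first'' is unnecessary. The clean argument fixes $\zeta$ and varies~$x$: after arranging (as in the proof of Proposition~\ref{tr-spb}) that $\{\varrho(a_1)\zeta,\ldots,\varrho(a_s)\zeta\}$ is a basis of $F$ and $\varrho(a_i)\zeta=0$ for $i>s$, the nilpotency gives
\[
0=\Tr\bigl((S_\varrho\varrho(x))_{|F}\bigr)=\sum_{i=1}^s\pi\bigl(\varrho(x b_ia_i)\zeta\bigr),
\quad\text{i.e.}\quad \varrho(x)\,(\bm{ba}\,\zeta)=0,
\]
for \emph{every} $x$ with $\varrho(x)\VSrho\zeta\subseteq\CC\zeta$. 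Since $\bm{ba}\,\zeta\in\VSrho\zeta$, Jacobson density lets you choose such an $x$ with $\varrho(x)(\bm{ba}\,\zeta)=\zeta$ whenever $\bm{ba}\,\zeta\neq0$, a contradiction. Hence $\bm{ba}\,\zeta=0$ for every $\zeta$, and $\varrho(\bm{ba})=0$. Your ``matrix with respect to a spanning set'' phrasing also needs this basis reduction to make the trace well defined; once that is in place, no genericity of $\zeta$ and no appeal to Lemma~\ref{free} is required.
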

\begin{proof}
Let ${S}_\varrho\colon\varrho(A)\to\varrho(A)$ denote the induced elementary operator, where $\varrho$
is an irreducible representation of~$A$ on a Banach space~$E_\varrho$. Then $r\bigr(S_\varrho\varrho(x)\bigl)=0$ for all $x\in A$.
As indicated in the Introduction, whenever $\zeta\in E_\varrho$ and $x\in A$ are such that
$\varrho(x)\VSrho\zeta\subseteq\CC\zeta$ then $S_\varrho\LSrho\zeta\subseteq\LSrho\zeta$ and thus
${S_\varrho}_{|\LSrho\zeta}$ is nilpotent.
As a result, the assumptions in \cite[Proposition~3.1]{NaMa13} are satisfied and it thus follows
that $\varrho\bigl(\sum_{i=1}^n b_i a_i\bigr)=0$.
\end{proof}

In the spectrally bounded case, we have the following more general situation.
Let us point out once again that, when $S\in\ElA$ is spectrally bounded and $\varrho$ is an irreducible representation of~$A$,
the elementary operator $S_\varrho\colon \varrho(A)\to\varrho(A)$ may or may not be spectrally bounded;
nevertheless the operator $\varrho\circ S$ \textit{is\/} spectrally bounded (with $\spn{\varrho\circ S}\leq\spn S$).

\begin{prop}\label{tr-spb}
Let $A$ be a unital Banach algebra, and let  $S=S_{\bm a,\bm b}$ be a spectrally bounded elementary operator on~$A$.
Then $\bm b\bm a\in\mathcal Z (A)$.
\end{prop}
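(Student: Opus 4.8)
The plan is to reduce the statement to the case of an irreducible representation and there to combine the finite-dimensional invariant-subspace device used in the Introduction and in Proposition~\ref{prop:infini_elops} with the finite-dimensional analysis of (locally quasi-nilpotent, resp.\ locally spectrally bounded) elementary operators from~\cite{NaMa13}.

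First I would carry out the reduction. Write $\pi\colon A\to A/\rada$ for the canonical epimorphism, so that $\mathcal Z(A)=\pi^{-1}\bigl(Z(A/\rada)\bigr)$, and recall that $\rada$ is the intersection of the kernels of all irreducible representations $\varrho$ of~$A$. Hence $\bm b\bm a\in\mathcal Z(A)$ precisely when, for every such~$\varrho$, the element $\varrho(\bm b\bm a)$ commutes with every element of $\varrho(A)$; and since $\varrho(A)$ acts algebraically irreducibly on the Banach space $E_\varrho$, Schur's lemma together with the Gelfand--Mazur theorem identifies the commutant of $\varrho(A)$ in $\mathscr L(E_\varrho)$ with $\CC I$. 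Thus it suffices to show that $\varrho(\bm b\bm a)\in\CC I$ for every irreducible representation~$\varrho$ of~$A$. Fix one, on $E=E_\varrho$; if $\dim E=1$ there is nothing to prove, so assume $\dim E\ge2$. By the remark preceding the statement, $\varrho\circ S$ is spectrally bounded with $\spn{\varrho\circ S}\le\spn S=:M$, so, writing $\varrho(Sx)=S_\varrho\varrho(x)$ as usual, we have $r\bigl(\varrho(Sx)\bigr)\le M\,r(x)$ for all $x\in A$.

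Next I would set up the localisation. Suppose, towards a contradiction, that $c:=\varrho(\bm b\bm a)\notin\CC I$; then $\lDim\spa\{c,I\}=2$. By Lemma~\ref{free}, choose $\zeta\in E$ that is simultaneously a separating vector for $\LSrho$, for $\VpSrho$ and for $\spa\{c,I\}$, so that in particular $\{c\zeta,\zeta\}$ is linearly independent. Put $W=\VpSrho\zeta$ --- a finite-dimensional subspace containing $\zeta$, all the vectors $\varrho(b_ia_j)\zeta$, and hence $c\zeta=\sum_j\varrho(b_ja_j)\zeta$ --- and $F=\LSrho\zeta$. By the Jacobson density theorem, $\varrho(A)$ is $(\dim W)$-transitive, so for each linear functional $f$ on $W$ there is $x_f\in A$ with $\varrho(x_f)w=f(w)\,\zeta$ for all $w\in W$. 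One checks directly that $F$ is then invariant under $\varrho(Sx_f)$ and that $\varrho(Sx_f)|_F$ is governed by the $n\times n$ ``localisation matrix'' $\bigl(f(\varrho(b_ia_j)\zeta)\bigr)_{i,j}$, whose trace is exactly $f(c\zeta)$; and, since $c\zeta\notin\CC\zeta$, the quantity $f(c\zeta)$ takes every complex value as $f$ varies.

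The heart of the proof is to play this against spectral boundedness: $r\bigl(\varrho(Sx_f)|_F\bigr)\le r\bigl(\varrho(Sx_f)\bigr)\le M\,r(x_f)$, so the eigenvalues --- hence the trace --- of the localised operator are bounded in terms of $r(x_f)$. If one can choose the test elements $x_f$ with $r(x_f)$ staying bounded (ideally, as idempotents, which always have $r(x_f)=1$, or as suitable quasi-nilpotent elements), the freedom in $f(c\zeta)$ yields the contradiction and the proof is complete. Turning this into a rigorous argument is exactly the point at which the finite-dimensional machinery of~\cite{NaMa13} is needed: one must control the discrepancy between $\varrho(Sx_f)|_F$ and the full localisation matrix (which arises when $\LSrho$ is locally linearly dependent), and, above all, one must produce enough test elements of controlled spectral radius in an \emph{arbitrary} Banach algebra. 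I expect this last point to be the main obstacle; it is where the general case genuinely departs from the \C*\ situation of Corollary~\ref{cor:cstar-quotients}, in which Kadison's transitivity theorem supplies norm-one projections with prescribed finite-dimensional behaviour and the argument becomes short. The reduction to irreducible representations and the invariant-subspace bookkeeping, by contrast, are routine.
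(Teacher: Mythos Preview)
Your reduction to irreducible representations and the localisation to the finite-dimensional invariant subspace $F=\LSrho\zeta$ are correct and coincide with the paper's setup; the trace identification $\Tr\bigl(S_\varrho\varrho(x_f)|_F\bigr)=f(c\zeta)$ is also right (after the standard re-representation with $\varrho(a_i)\zeta=0$ for $i>s$). You locate the gap accurately: one must produce test elements $x$ with $r(x)$ under control while the trace of $S_\varrho\varrho(x)|_F$ is unbounded. But you have not resolved it, and neither idempotents nor the machinery of~\cite{NaMa13} is what closes it; as written, your argument is a correct diagnosis followed by an unfinished proof.

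The missing device is \emph{conjugation}. By Sinclair's strengthening of the density theorem \cite[Corollary~4.2.6]{Aup}, for each $k\in\N$ one can choose $x_k\in\invA$ whose image $\varrho(x_k)$ takes prescribed values on any finite set of vectors; one also fixes a single $y\in A$ with $\varrho(y)$ prescribed on that set. The test element is $x_k^{}yx_k^{-1}$, and since similar elements share the same spectrum, $r(x_k^{}yx_k^{-1})=r(y)$ is independent of~$k$. Prescribing, roughly, $\varrho(x_k)\zeta=\tfrac1k\,\varrho(b_{i_j}a_{i_j})\zeta$, $\varrho(x_k)\varrho(b_{i_j}a_{i_j})\zeta=\zeta$ and fixing the remaining basis vectors of $\VpSrho\zeta$, one arranges $\varrho(x_k^{}yx_k^{-1})\VSrho\zeta\subseteq\CC\zeta$ together with $\Tr\bigl(S_\varrho\varrho(x_k^{}yx_k^{-1})|_F\bigr)=k\,(1+\sum_{i\in J}\alpha_j^i)$. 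The bound $|\Tr|\le(\dim F)\cdot r\bigl(\varrho S(x_k^{}yx_k^{-1})\bigr)\le(\dim F)\,\spn S\,r(y)$ then forces each coefficient $1+\sum_{i\in J}\alpha_j^i$ to vanish, and summing over~$j$ gives $\varrho(\bm b\bm a)\zeta\in\CC\zeta$. This conjugation trick sidesteps entirely the problem of constructing small-radius elements with prescribed finite-dimensional action in a general Banach algebra, and it---not \cite{NaMa13}---is the idea you were looking for.
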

\begin{proof}
Let $\varrho$ be an irreducible  representation of $A$ on a  Banach space~$E$.
Suppose that there exist $\zeta\in E$ and $t\in\{1, \ldots, n\}$ such that the set $\{\varrho (b_t a_t) \zeta, \zeta\}$
is linearly independent (that is, $\varrho(b_t a_t)\notin\CC1$).
Set $s=\Dim\LSrho\zeta$.
As in the proof of \cite[Proposition~3.1]{NaMa13}, we assume without loss of generality that
$\{\varrho(a_1)\zeta, \ldots, \varrho(a_s)\zeta\}$ is linearly independent and $\varrho (a_t) \zeta=0$ for $t >s$.
Choose $i_1, \ldots, i_r$ such that $\Xi=\{\zeta,\varrho(b_{i_1}a_{i_1})\zeta,\ldots,\varrho(b_{i_r} a_{i_r})\zeta\}$ is linearly independent,
$r$ being maximal (in particular, $\varrho(b_{i_t}a_{i_t})\notin\CC1$ for all $1\leq t\leq r$).
Fix $j \in \{1, \ldots, r\}$. By Sinclair's theorem \cite[Corollary~4.2.6]{Aup}, for each $k\in\N$ there is $x_k\in\invA$ such that
\begin{equation*}
\varrho ( x_k ) \zeta= \frac{1}{k} \varrho( b_{i_j} a_{i_j}) \zeta,\;
\varrho( x_k b_{i_j} a_{i_j}) \zeta = \zeta, \text{ and }
\varrho (x_k b_{i_t} a_{i_t}) \zeta = \varrho (b_{i_t} a_{i_t}) \zeta \ \,(1\leq t\leq r,\,t \neq j).
\end{equation*}
Choose $y \in A$ such that
\begin{equation*}
\varrho( y) \zeta=\varrho ( b_{i_j} a_{i_j}) \zeta \quad \text{and} \quad \varrho (y b_{i_t} a_{i_t}) \zeta =0 \quad(1 \leq t \leq r).
\end{equation*}
Then
\begin{equation}\label{eq:eigenvalues}
\varrho (x_k^{} y x_k^{-1} b_{i_j} a_{i_j}) \zeta= k\,\zeta
\quad\text{and}\quad
\varrho( x_k^{} y x_k^{-1} b_{i_t} a_{i_t}) \zeta=0\quad(1\leq t\leq r,\,t \neq j).
\end{equation}
Replacing the above-chosen $x_k$ and $y$ by ones which, in addition, satisfy
\begin{equation}\label{eq:better-xk-y}
\varrho(x_kb_ia_\ell)\zeta=\varrho(b_ia_\ell)\zeta\quad\text{and}\quad
\varrho(yb_ia_\ell)\zeta=0
\end{equation}
for all $1\leq i,\ell\leq n$, $i\ne\ell$ such that $\varrho(b_ia_\ell)\zeta\notin\spa\,\Xi$,
we can also assume that
\begin{equation}\label{eq:prop-V}
\varrho(x_k^{} y x_k^{-1})\VSrho\zeta\subseteq\CC\zeta.
\end{equation}

Now let $J= \{1, \ldots, n\}\setminus\{i_1, \ldots, i_r\}$ and write, for all $i \in J$,
\[
\varrho (b_i a_i) \zeta= \alpha_i \zeta + \sum_{t=1}^r \alpha_t^i \,\varrho (b_{i_t} a_{i_t}) \zeta.
\]
Then, by~\eqref{eq:eigenvalues}, $\varrho(x_k^{} y x_k^{-1} b_{i} a_{i}) \zeta= \alpha_j^i k\,\zeta$ for all $i \in J$.
Since $S_\varrho\varrho(x_k^{} y x_k^{-1} )$ leaves the finite-dimensional subspace $\LSrho\zeta$ invariant, by~\eqref{eq:prop-V},
we obtain
\begin{equation*}
\Tr\bigl(S_\varrho\varrho(x_k^{} y x_k^{-1} )_{|\LSrho\zeta}\bigr)= k \bigl(1+ \sum_{i \in J}\alpha_j^i \bigr),
\end{equation*}
where $\Tr$ denotes the trace on $\LSrho\zeta$.
Because the trace of a linear mapping $u$ on $\LSrho\zeta$ is dominated by $s\cdot r(u)$ and thus
\begin{equation*}
k \bigl(1+ \sum_{i \in J}\alpha_j^i \bigr)\leq s\cdot r\bigl(S_\varrho\varrho(x_k^{} y x_k^{-1})\bigr)
                                                                              = s\cdot r(\varrho S(x_k^{} y x_k^{-1}))
                                                                              \leq s\cdot\spn Sr(y)\quad(k\in\N),
\end{equation*}
we must have $1+ \sum_{i \in J}\alpha_j^i =0$. As
\begin{equation*}
\begin{split}
\sum_{i=1}^n \varrho(b_i a_i )\zeta= \sum_{i=1}^s \varrho(b_i a_i )\zeta
   &= \sum_{t=1}^r \varrho(b_{i_t} a_{i_t})\zeta + \sum_{i \in J} \varrho(b_i a_i)\zeta\\
   &= \sum_{t=1}^r \varrho(b_{i_t} a_{i_t})\zeta+ \sum_{i \in J} \bigl(\alpha_i\zeta+\sum_{t=1}^r \alpha_t^i \varrho(b_{i_t} a_{i_t})\zeta\bigr)\\
   &= \sum_{t=1}^r \bigl(1+ \sum_{i \in J} \alpha_t^i\bigr) \varrho(b_{i_t} a_{i_t})\zeta + \sum_{i \in J} \alpha_i\zeta,
\end{split}
\end{equation*}
we conclude that $\sum_{i=1}^n \varrho (b_i a_i) \zeta \in \mathbb C \zeta$. We have thereby shown  that $\textup{id}_E$ and
$\sum_{i=1}^n \varrho (b_i a_i)$ are locally linearly dependent. Thus $\sum_{i=1}^n \varrho (b_i a_i) \in \CC 1$, as desired.
\end{proof}

The following lemma is a key result enabling us to relate spectral boundedness to local quasi-nilpotency and thus to make use
of the results in~\cite{NaMa13}.

\begin{lem}\label{spb-nilp}
Let $A$ be a unital Banach algebra, and let $S$ be  a spectrally bounded  elementary operator on~$A$.
Let $\varrho$ be an irreducible representation  of $A$ on a Banach space~$E$.
Then, for every $\zeta \in E$ and $x\in A$ satisfying
\begin{equation*}
\varrho(x)\VSrho\zeta \subseteq\mathbb C \zeta \quad \text{and} \quad \varrho (x)\zeta=0,
\end{equation*}
the map $S_\varrho\varrho(x)_{|\LSrho\zeta}$ is nilpotent.
\end{lem}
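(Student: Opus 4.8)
The plan is to argue by contradiction: were the finite-dimensional operator $T:=(S_\varrho\varrho(x))_{|\LSrho\zeta}$ not nilpotent, it would carry a nonzero eigenvalue, and that eigenvalue could be amplified --- by means of a similarity of $x$ inside $A$ together with Sinclair's density theorem --- until it contradicts the estimate $r(Sz)\le\spn{S}\,r(z)$. In detail: write $S=S_{\bm a,\bm b}$, recall that $S_\varrho\varrho(x)=\varrho(Sx)$, and put $V=\LSrho\zeta=\spa\{\varrho(a_1)\zeta,\dots,\varrho(a_n)\zeta\}$; we may assume $V\neq\{0\}$, for otherwise there is nothing to prove. Since $b_ia_j\in\VS$ for all $i,j$, the hypothesis $\varrho(x)\VSrho\zeta\subseteq\CC\zeta$ gives $\varrho(Sx)\varrho(a_j)\zeta=\sum_i\varrho(a_i)\varrho(x)\varrho(b_ia_j)\zeta\in V$, so $\varrho(Sx)$ leaves $V$ invariant and $T=\varrho(Sx)_{|V}$ is a well-defined operator on the finite-dimensional space $V$. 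Assuming $T$ is not nilpotent, it has an eigenvalue $\mu\neq0$, and after replacing $x$ by $\mu^{-1}x$ (both hypotheses being homogeneous in $x$) we may take $\varrho(Sx)\eta=\eta$ for some $\eta\in V\setminus\{0\}$; in particular $\zeta\neq0$. Write $\eta=\varrho(a)\zeta$ with $a\in\LS$ and set $\xi_i:=\varrho(b_i)\eta=\varrho(b_ia)\zeta$ for $1\le i\le n$. Since $a\in\LS$ we have $b_ia\in\VS$, so there are scalars $\nu_1,\dots,\nu_n$ with $\varrho(x)\xi_i=\nu_i\zeta$, and then $\eta=\varrho(Sx)\eta=\sum_{i=1}^n\varrho(a_i)\varrho(x)\xi_i=\sum_{i=1}^n\nu_i\varrho(a_i)\zeta$.

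For the amplification, I would choose indices $l_1,\dots,l_p$ so that $\{\zeta,\xi_{l_1},\dots,\xi_{l_p}\}$ is a maximal linearly independent subset of $\{\zeta,\xi_1,\dots,\xi_n\}$ --- keeping $\zeta$ in this basis being essential. Expanding $\xi_i=\gamma_i\zeta+\sum_{t=1}^p\gamma_t^i\xi_{l_t}$ and applying $\varrho(x)$, using $\varrho(x)\zeta=0$, gives $\nu_i=\sum_{t=1}^p\gamma_t^i\nu_{l_t}$ for each $i$. Fix $k\in\N$. Since both $\{\zeta,\xi_{l_1},\dots,\xi_{l_p}\}$ and $\{\zeta,\tfrac1k\xi_{l_1},\dots,\tfrac1k\xi_{l_p}\}$ are linearly independent, Sinclair's theorem \cite[Corollary~4.2.6]{Aup} provides $x_k\in\invA$ with $\varrho(x_k)\zeta=\zeta$ and $\varrho(x_k)\xi_{l_t}=\tfrac1k\xi_{l_t}$ $(1\le t\le p)$. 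Put $w_k:=x_kxx_k^{-1}$, so that $r(w_k)=r(x)$. As $\varrho(x_k)^{-1}$ fixes $\zeta$ and multiplies each $\xi_{l_t}$ by $k$, one computes $\varrho(w_k)\zeta=\varrho(x_k)\varrho(x)\zeta=0$ and $\varrho(w_k)\xi_{l_t}=\varrho(x_k)\bigl(k\nu_{l_t}\zeta\bigr)=k\nu_{l_t}\zeta$; hence, by linearity and the relation $\nu_i=\sum_t\gamma_t^i\nu_{l_t}$, we get $\varrho(w_k)\xi_i=k\nu_i\zeta$ for all $i$. Therefore $\varrho(Sw_k)\eta=\sum_{i=1}^n\varrho(a_i)\varrho(w_k)\varrho(b_i)\eta=\sum_{i=1}^n\varrho(a_i)\bigl(k\nu_i\zeta\bigr)=k\eta$, so $k$ is an eigenvalue of the operator $\varrho(Sw_k)$ on $E$.

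Since $\varrho$ is a unital homomorphism, $r\bigl(\varrho(Sw_k)\bigr)\le r(Sw_k)$, whence $k\le r(Sw_k)\le\spn{S}\,r(w_k)=\spn{S}\,r(x)$ for every $k\in\N$ --- a contradiction. So $T$ is nilpotent, as claimed. The main obstacle is precisely this amplification: the eigenvalue of $\varrho(Sw_k)$ must be pushed to infinity while $r(w_k)$ remains bounded, which works only because $w_k$ is a \emph{similarity} $x_kxx_k^{-1}$ (leaving $r$ unchanged) whose conjugator $x_k$ is constrained solely through its action on the finite-dimensional space $\spa\{\zeta,\xi_1,\dots,\xi_n\}$, furnished by Sinclair's theorem. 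A subsidiary point needing care is the bookkeeping of linear dependences among $\zeta$ and the $\xi_i$: expanding with respect to a basis that \emph{contains} $\zeta$ keeps the prescription of $x_k$ consistent even when $\zeta\in\spa\{\xi_1,\dots,\xi_n\}$ --- the analogue here of the set $\Xi$ and the auxiliary conditions \eqref{eq:better-xk-y} in the proof of Proposition~\ref{tr-spb}. (In contrast with that proof, this argument does not invoke $\bm{ba}\in\mathcal Z(A)$.)
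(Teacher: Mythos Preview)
Your argument is correct and, in fact, somewhat cleaner than the paper's. Both proofs share the same skeleton --- extract a nonzero eigenvalue of the finite-dimensional restriction, then use Sinclair's theorem to produce invertible conjugators $x_k$ so that the similar elements have bounded spectral radius but their images under $S$ acquire the eigenvalue~$k$ --- but the mechanics differ. The paper first rewrites $S=M_{u,v}+\sum_{i=1}^{n-1}M_{u_i,v_i}$ so that $\varrho(u)\zeta$ is the eigenvector, deduces $\varrho(xvu)\zeta=\lambda\zeta$ and $\varrho(xv_iu)\zeta=0$, and then builds \emph{two} elements: an invertible $x_k$ that interchanges $\zeta$ and $\varrho(vu)\zeta$ (up to a factor $1/k$) while fixing the remaining basis vectors, and an auxiliary $y$ with $\varrho(y)\zeta=\varrho(vu)\zeta$ and $\varrho(y)$ annihilating the rest; the amplified element is $x_kyx_k^{-1}$. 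You instead keep the original representation $S=\sum M_{a_i,b_i}$, record the scalars $\nu_i$ with $\varrho(x)\xi_i=\nu_i\zeta$, and choose a purely \emph{diagonal} $x_k$ (fixing $\zeta$, scaling the $\xi_{l_t}$ by $1/k$) so that the conjugate $x_kxx_k^{-1}$ of the \emph{given} $x$ already has the required amplified action. This avoids both the change of representation of $S$ and the construction of the auxiliary~$y$. The paper's swap-type transformation is closer in flavour to the techniques reused later (e.g., in Theorem~\ref{spb-Ger}), but for the lemma at hand your diagonal scaling is the more economical choice.
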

\begin{proof}
Suppose that there exist $\zeta\in E$ and $x\in A$ satisfying $\varrho(x)\VSrho\zeta\subseteq \CC \zeta$, $\varrho(x)\zeta=0$ and
$S_\varrho\varrho(x)_{|\LSrho\zeta}$ is not nilpotent.  Set $s= \Dim\LSrho\zeta$.
Choose $u \in \LS$ such that $\varrho (u) \zeta \neq 0$ and $S_\varrho\varrho(x)\varrho(u)\zeta= \lambda\varrho(u)\zeta$,
where $\lambda$ is a non-zero eigenvalue of $S_\varrho\varrho(x)_{|\LSrho\zeta}$.
Write $S= M_{u,v}+ \sum_{i=1}^{n-1} M_{u_i,v_i}$.
By the same argument as in the proof of Proposition~\ref{tr-spb}, we can assume that $\varrho(u_i) \zeta=0$ for $i > s-1$.  Therefore,
\begin{equation*}
\varrho (uxvu) \zeta +\sum_{i=1}^{s-1} \varrho(u_i xv_iu )\zeta = \lambda \varrho( u) \zeta.
\end{equation*}
Since $\{\varrho (u) \zeta, \varrho (u_1) \zeta,\ldots, \varrho(u_{s-1}) \zeta   \}$ is linearly independent and
\[
\varrho(x)\,\{\varrho(vu)\zeta,\varrho(v_1u)\zeta,\ldots,\varrho(v_{s-1}u)\zeta\}\subseteq\CC\zeta,
\]
we get
\begin{equation}\label{00}
\varrho (xvu ) \zeta= \lambda \zeta  \quad \text{and} \quad \varrho (xv_i u) \zeta=0\quad(1 \leq i \leq s-1).
\end{equation}
Our assumption on $\zeta$ and $x$ implies that  $\{\zeta, \varrho (vu) \zeta \}$ is linearly independent.
Let $r\leq s-1$ be maximal such that
$\{\zeta, \varrho (vu) \zeta, \varrho( v_{i_1}u ) \zeta, \ldots, \varrho (v_{i_r}u )\zeta\}$ is linearly independent.
By the equations in \eqref{00}, we  infer that
$\varrho (v_iu )\zeta \in \spa \{\zeta, \rho (v_{i_1}u) \zeta, \ldots,\rho (v_{i_r}u )\zeta \}$
for each $1 \leq i \leq s-1$. Let $x_k \in\invA$, $k\in\N$ be such that
\begin{equation*}
\varrho(x_k)\zeta= \frac{1}{k}\varrho(vu)\zeta,\; \varrho(x_k vu )\zeta= \zeta \quad \text{and}\quad
           \varrho (x_k v_{i_t}u) \zeta=\varrho (v_{i_t} u ) \zeta \quad(1 \leq t \leq r).
\end{equation*}
Choose $y \in A$ such that
\begin{equation*}
\varrho (y) \zeta=\varrho (vu) \zeta \quad \text{and} \quad\varrho (y vu) \zeta= \varrho (y v_{i_t} u) \zeta= 0\quad(1 \leq t \leq r).
\end{equation*}
Then
\begin{equation*}
\rho (x_k^{} y x_k^{-1}vu) \zeta= k \zeta\quad\text{and}\quad\varrho(x_k^{} y x_k^{-1}v_{i_t}u) \zeta= 0\quad(1 \leq t \leq r)
\end{equation*}
and therefore $\varrho(x_k^{} y x_k^{-1}v_{i}u) \zeta =0$ for all $1 \leq i \leq s-1$.This entails that
\begin{equation*}
\varrho\,(S (x_k^{} y x_k^{-1})\, u) \zeta = k \varrho(u)\zeta\quad(k\in\N),
\end{equation*}
which contradicts our assumption $r(\varrho\,Sy)\leq\spn S\,r(y)$.
\end{proof}
In the situation of the above lemma, the following notation is useful.
For $\zeta\in E$, denote by $\pi\colon\CC\zeta\to\CC$, $\pi(\zeta)=1$ the canonical map.
For a linear mapping $y$ on $E$ and a basis $\{\zeta_1,\ldots,\zeta_k\}$ of a $y$-invariant subspace of~$E$,
let $M\bigl(y,\{\zeta_1,\ldots,\zeta_k\}\bigr)$ denote the corresponding matrix representation with respect to $\{\zeta_1,\ldots,\zeta_k\}$.
For instance, if $S=\sum_{i=1}^n  M_{a_i,b_i}\in\ElA$, $\varrho$ is an irreducible representation of $A$ on $E$ and
$\{\varrho(a_1)\zeta, \ldots, \varrho(a_n)\zeta\}$ is linearly independent, then
\[
M\bigl(S_\varrho\varrho(x), \{\varrho(a_1)\zeta, \ldots, \varrho(a_n)\zeta\}\bigr)= (\pi(\varrho(x b_i a_j)\zeta))_{1 \leq i, j \leq n}
\]
for every $\zeta\in E$ and $x\in A$ such that $\varrho(x)\VSrho\zeta\subseteq\CC\zeta$.

\smallskip
Let $S$ be an elementary operator of length $n$ on an algebra~$A$.
For each $u \in\LS$, we denote by $S\langle u \rangle$ the set
\be
S\langle u \rangle = \{v \in\RS\mid \text{there exists } S_{n-1}\in\Ell_{n-1} (A)\text{ such that }S= M_{u,v} + S_{n-1} \}.
\ee

\begin{prop}\label{spb-notqu}
Let $A$ be a unital Banach algebra, and let $S$ be  a spectrally bounded elementary operator of length $n$ on~$A$.
Let $\varrho$ be an irreducible representation of $A$ on a Banach space~$E$.
Suppose that $\Dim\VpSrho= \lDim\VpSrho$. Then either for every $\zeta \in E$ and $x \in A$ satisfying
$\varrho(x)\VSrho\zeta \subseteq \mathbb C \zeta$,  we have\/ $S_\varrho\varrho(x)_{|\LSrho\zeta}$ is nilpotent
or there exist ${S}_1 \in \Ell_{1} (A)$, ${S}_2 \in \Ell_{n-1} (A)$  such that $\varrho\,{S}_1$ is scalar multiple of a
homomorphism and ${S}= {S}_1+ {S}_2$.
\end{prop}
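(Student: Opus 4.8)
The plan is to assume the first alternative fails and to deduce the decomposition. So suppose there are $\zeta\in E$ and $x\in A$ with $\varrho(x)\VSrho\zeta\subseteq\CC\zeta$ while $S_\varrho\varrho(x)_{|\LSrho\zeta}$ is not nilpotent. First I would fix a non-zero eigenvalue $\lambda$ of $S_\varrho\varrho(x)_{|\LSrho\zeta}$, pick $u\in\LS$ with $\varrho(u)\zeta\neq0$ and $S_\varrho\varrho(x)\varrho(u)\zeta=\lambda\varrho(u)\zeta$, and write $S=M_{u,v}+\sum_{i=1}^{n-1}M_{u_i,v_i}$, arranging by a change of basis of $\LS$ that $\{\varrho(u)\zeta,\varrho(u_1)\zeta,\dots,\varrho(u_{s-1})\zeta\}$ is a basis of $\LSrho\zeta$, where $s=\Dim\LSrho\zeta$, and that $\varrho(u_i)\zeta=0$ for $i\geq s$. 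Expanding the eigenvalue equation and using $\varrho(x)\VSrho\zeta\subseteq\CC\zeta$ together with linear independence, exactly as in the proofs of Proposition~\ref{tr-spb} and Lemma~\ref{spb-nilp}, I would obtain $\varrho(xvu)\zeta=\lambda\zeta$ and $\varrho(xv_iu)\zeta=0$ for $1\leq i\leq s-1$. I would also record that necessarily $\zeta\in\VSrho\zeta$: otherwise Jacobson's density theorem lets one replace $x$ by an $x'$ that agrees with $x$ on $\VSrho\zeta$ but annihilates $\zeta$, which leaves $S_\varrho\varrho(x')_{|\LSrho\zeta}$ and hence its non-nilpotency unchanged yet contradicts Lemma~\ref{spb-nilp}.

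The heart of the proof is a dichotomy according to whether $\{\zeta,\varrho(vu)\zeta\}$ is linearly independent. If it is, the plan is to re-run the Sinclair-type construction from the proof of Lemma~\ref{spb-nilp}: taking a maximal linearly independent subset of $\{\zeta,\varrho(vu)\zeta,\varrho(v_1u)\zeta,\dots\}$ and, via Sinclair's theorem and Jacobson density, building $x_k\in\invA$ and a fixed $y\in A$ which in addition fulfil the analogues of~\eqref{eq:better-xk-y} and~\eqref{eq:prop-V}, one is led to $\varrho\bigl(S(x_k^{}yx_k^{-1})u\bigr)\zeta=k\,\varrho(u)\zeta$ for all $k$, contradicting $r(\varrho\,Sy)\leq\spn S\,r(y)$. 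The extra difficulty compared with Lemma~\ref{spb-nilp} is that $\varrho(x)\zeta$ need no longer vanish, so the $\zeta$-component of $\varrho(v_iu)\zeta$ has to be tracked through the construction; this is exactly where the hypothesis $\Dim\VpSrho=\lDim\VpSrho$ enters, ensuring (through Lemma~\ref{free} applied to $\LSrho$, $\RSrho$ and $\VpSrho$) that the auxiliary vectors can be chosen in sufficiently general position for the bookkeeping to close. If instead $\varrho(x)\zeta=0$, Lemma~\ref{spb-nilp} applies directly and contradicts non-nilpotency; so one may assume $\varrho(x)\zeta=\mu\zeta$ with $\mu\neq0$, and then in the remaining, parallel case $\varrho(vu)\zeta=c\,\zeta$ with $c=\lambda/\mu\neq0$.

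It then remains to pass from the single identity $\varrho(vu)\zeta=c\,\zeta$ to the global one $\varrho(vu)=c\,1$, and this is the step I expect to be the main obstacle. If $\varrho(vu)\neq c\,1$ then $\varrho(vu)-c\,1$ is a non-zero member of $\VpSrho$; the plan is to use the existence of a separating vector for $\VpSrho$ (the meaning of $\Dim\VpSrho=\lDim\VpSrho$) together with irreducibility of $\varrho$ to bring a vector witnessing $\varrho(vu)-c\,1\neq0$ into the right position relative to $\zeta$ and $\VSrho\zeta$, and thereby, for a suitable fixed $y\in A$, again to produce $\varrho\bigl(S(x_k^{}yx_k^{-1})u\bigr)\zeta=k\,\varrho(u)\zeta$, contradicting spectral boundedness. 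Once $\varrho(vu)=c\,1$ with $c\neq0$ is known, the rest is immediate: $x\mapsto c^{-1}\varrho(uxv)$ is a homomorphism, so $\varrho\,M_{u,v}$ is $c$ times a homomorphism, and $S_1=M_{u,v}\in\Ell_1(A)$, $S_2=\sum_{i=1}^{n-1}M_{u_i,v_i}\in\Ell_{n-1}(A)$ (the latter of length exactly $n-1$ since $\ell(S)=n$) give the required decomposition $S=S_1+S_2$.
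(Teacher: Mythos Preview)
Your outline has a genuine gap: the dichotomy you set up is the wrong one, and as a consequence your Case~A cannot be closed by contradiction. You split according to whether $\{\zeta,\varrho(vu)\zeta\}$ is linearly independent, whereas the paper's split is at the level of \emph{operators}: whether $I$ lies in $\spa\{\varrho(vu),\varrho(v_1u),\ldots,\varrho(v_ru)\}$. These do not match. It can perfectly well happen that $I=\gamma\varrho(vu)+\sum_t\tau_t\varrho(v_tu)$ with $\gamma\neq0$ and with $\{\zeta,\varrho(vu)\zeta\}$ still linearly independent; in that situation the Sinclair construction from Lemma~\ref{spb-nilp} breaks down (you can no longer prescribe $x_k$ independently on $\zeta$ and on the $\varrho(v_\cdot u)\zeta$'s, since $\zeta$ lies in their span), and indeed no contradiction is available: this is precisely the case where the decomposition exists. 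The remedy is not a contradiction but a change of coefficient, replacing $v$ by $v'=v+\sum_t\tfrac{\tau_t}{\gamma}v_t\in S\langle u\rangle$ so that $\varrho(v'u)=\gamma^{-1}I$. Your proposal never contemplates modifying $v$, and your final sentence forces $S_1=M_{u,v}$ with the original~$v$, which need not satisfy $\varrho(vu)\in\CC I$.

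The same issue undermines your plan for passing from $\varrho(vu)\zeta=c\zeta$ to $\varrho(vu)=c\,1$: if $\zeta$ is not separating for $\VpSrho$ this implication simply fails, and the attempt to ``bring a witnessing vector into position'' to produce a contradiction is misguided for the same reason as above. What makes the argument work in the paper is an initial reduction you omit: one first \emph{transfers} the non-nilpotency from $\zeta$ to a separating vector $\zeta'$ of~$\VpSrho$. Concretely, if $\{\varrho(v_{i_t}u_{j_t})\zeta\}_t$ is a maximal independent family in $\VSrho\zeta$, it remains independent at $\zeta'$, so by density one can choose $y$ with $\varrho(yv_iu_j)\zeta'=(\pi\varrho(xv_iu_j)\zeta)\,\zeta'$; then $S_\varrho\varrho(y)_{|\LSrho\zeta'}$ has the same matrix and is again not nilpotent. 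Once $\zeta$ is separating, the operator-level case split translates cleanly to vectors: either $I$ is in the span (and one adjusts $v$ to $v'$ as above), or it is not, in which case $\{\zeta,\varrho(vu)\zeta,\ldots,\varrho(v_ru)\zeta\}$ is genuinely independent and the Lemma~\ref{spb-nilp} construction yields the contradiction.
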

\begin{proof}
Suppose that there exist $\zeta\in E$ and $x\in A$ satisfying $\varrho(x)\VSrho\zeta \subseteq\mathbb C \zeta$ and
$S_\varrho\varrho(x)_{|\LSrho\zeta}$ is not nilpotent.  Choose $u\in\LS$ such that $\varrho(u)\zeta\neq 0$ and
$S_\varrho\varrho(xu)\zeta= \lambda\varrho(u )\zeta$, where $\lambda$ is a non-zero eigenvalue of
$S_\varrho\varrho(x)_{|\LSrho\zeta}$. We claim that  we can assume that $\zeta$ is a separating vector of $\VpSrho$.
Indeed, let $\zeta'$ be a separating vector of $\VpSrho$. If $\{\varrho (v_{i_t}u_{j_t}) \zeta\}_{1 \leq t \leq r}$ is
linearly independent, $r$ being maximal, then $\{\varrho (v_{i_t}u_{j_t}) \zeta' \}_{1 \leq t \leq r}$ is linearly independent too.
Hence we can choose $y\in A$ so that $\varrho (y v_iu_j) \zeta'= (\pi \varrho (x v_i u_j) \zeta) \zeta'$, and consequently,
$S_\varrho\varrho(yu) \zeta'= \lambda \varrho (u) \zeta'$. The claim is proved.
Next set $s= \Dim\LSrho\zeta$ and write $S=M_{u,v}+ \sum_{i=1}^{n-1} M_{u_i,v_i}$.
Then we may assume that $\varrho (u_i) \zeta=0$ for $s \leq i \leq n-1$. We have
\begin{equation*}
\varrho \bigl(uxvu  +\sum_{i=1}^{s-1} u_i xv_iu\bigr) \zeta = \lambda \varrho (u) \zeta.
\end{equation*}
Suppose, for instance,  that $\{\varrho (vu), \varrho (v_1 u), \ldots, \varrho (v_r u) \}$ is linearly independent,
$r$ being maximal and $r \leq s-1$. Our assumption on $x$ and $\zeta$ implies that for each
$1 \leq i \leq s-1$, $\varrho (v_i u) \in \spa \{\varrho (v_1 u), \ldots, \varrho (v_r u) \}$.
We write $I=\textup{id}_E$ and distinguish two cases:

\smallskip\noindent
Case 1: $I \in \spa \{ \varrho (vu), \varrho (v_1 u), \ldots, \varrho (v_ru) \}$.

\noindent
Suppose for a moment that $ I \in \spa \{ \varrho( v_1 u), \ldots, \varrho (v_r u)\}$.
Since $\varrho (x v_t u) \zeta=0$, we must have $\varrho(x) \zeta=0$. Using Lemma~\ref{spb-nilp}, we get  a contradiction.
Thus, there exist a non-zero complex number $\gamma$ and complex numbers $\tau_t$ such that
$I= \varrho (\gamma vu + \sum \tau_t v_t u)$. Set  $v'=v- \sum \frac{\tau_t}{\gamma} v_t$.
Then  $v' \in {S} \langle u \rangle$, $\varrho (v'u) \in \CC I$, and clearly $\varrho (v'u ) \neq 0$.

\smallskip\noindent
Case 2:  $I \not \in \spa \{ \varrho (vu), \varrho (v_1 u), \ldots, \varrho (v_r u)\}$.

\noindent
Since $ \lDim\VpSrho= \Dim\VpSrho$, $\{ \zeta, \varrho( vu) \zeta, \ldots, \varrho(v_t u) \zeta \}$
is linearly independent. As in the proof of Lemma~\ref{spb-nilp},  we get a contradiction using suitable $x_k\in\invA$ and $y\in A$.
We have thereby shown that this case cannot occur.
\end{proof}
\begin{lem}\label{lem:ldimV'=1}
Let $A$ be a unital Banach algebra, and let $S$ be  an  elementary operator on~$A$.
Let $\varrho$ be an irreducible representation of $A$ on a Banach space~$E$.
Suppose that $\ell(S_\varrho)=n$ and\/  $\lDim\VpSrho= 1$. Set $\lDim\LSrho=r$.
Then $\VSrho\subseteq \CC I$ and $S_\varrho$ admits a representation of the form $S_\varrho= \sum_{i=1}^n M_{u_i,v_i}$, where
\be
(v_i u_j)_{1 \leq i,j \leq n}= \left(
                                   \begin{array}{cc}
                                     T & 0 \\
                                     * & 0 \\
                                   \end{array}
                                 \right)
\ee
 and $T$ is a triangular  matrix of order~$r$. Moreover, $S_\varrho^* S_\varrho^{}\in\CC I$.
\end{lem}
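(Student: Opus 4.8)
The plan is to settle the three assertions one at a time, with almost all the work going into the construction of the stated representation; the tool throughout is recombination of the coefficients of an elementary operator as in the proof of Lemma~\ref{lem:various-repns}. First, since $I\in\VpSrho$, each nonzero $\zeta\in E$ has $\Dim\VpSrho\zeta\ge 1$, so $\lDim\VpSrho=1$ forces $\VpSrho\zeta=\CC\zeta$ for every $\zeta$; since an operator $w$ on $E$ with $w\eta\in\CC\eta$ for all $\eta\in E$ must be scalar, this gives $\VpSrho=\CC I$, and in particular $\VSrho\subseteq\CC I$, which is the first claim. Fix from now on a representation $S_\varrho=\sum_{i=1}^n M_{\alpha_i,\beta_i}$ with $n=\ell(S_\varrho)$, so that $\{\alpha_1,\dots,\alpha_n\}$ and $\{\beta_1,\dots,\beta_n\}$ are linearly independent in $\varrho(A)$ and $\beta_i\alpha_j\in\CC I$ for all $i,j$; write $\beta_i\alpha_j=c_{ij}I$ and $C=(c_{ij})_{1\le i,j\le n}$.

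The main point is to bring $C$ into the required block-triangular shape by two successive recombinations. Pick $\zeta_0\in E$ with $\Dim\LSrho\zeta_0=r$ (possible since $r=\lDim\LSrho$), and after reordering assume $\{\alpha_1\zeta_0,\dots,\alpha_r\zeta_0\}$ is linearly independent while $\alpha_j\zeta_0=\sum_{k\le r}\mu_{jk}\alpha_k\zeta_0$ for $j>r$. Replace $\alpha_j$ by $\alpha_j-\sum_{k\le r}\mu_{jk}\alpha_k$ for $j>r$, compensating by $\beta_i\mapsto\beta_i+\sum_{j>r}\mu_{ji}\beta_j$ for $i\le r$ -- the adjustment of the $\beta$'s that keeps $S_\varrho$ fixed, exactly as in the proof of Lemma~\ref{lem:various-repns}: this is again a length-$n$ representation, all products still lie in $\CC I$, and now $\alpha_j\zeta_0=0$ for $j>r$. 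Hence, for $j>r$ and any $i$, $c_{ij}\zeta_0=\beta_i\alpha_j\zeta_0=0$, so $c_{ij}=0$ and $C=\left(\begin{smallmatrix}C_{11}&0\\ C_{21}&0\end{smallmatrix}\right)$ with $C_{11}$ of order~$r$. Now apply an invertible linear change of variables to $\alpha_1,\dots,\alpha_r$ only (leaving $\alpha_{r+1},\dots,\alpha_n$ fixed): linear independence of the $\alpha$'s forces the contragredient (inverse-transpose) change on $\beta_1,\dots,\beta_r$, and a short block computation shows this replaces $C$ by a matrix of the same shape whose top-left corner is \emph{similar} to $C_{11}$ while its last $n-r$ columns remain zero. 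Over $\CC$ one can choose the change so that this similar copy $T$ of $C_{11}$ is triangular; writing $u_i,v_i$ for the resulting coefficients gives $S_\varrho=\sum_{i=1}^n M_{u_i,v_i}$ with all $v_iu_j\in\CC I$ and $(v_iu_j)_{1\le i,j\le n}=\left(\begin{smallmatrix}T&0\\ *&0\end{smallmatrix}\right)$, $T$ triangular of order~$r$.

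For the last assertion, put $v_iu_j=d_{ij}I$; then $(d_{ij})$ has the same block shape and $d_{ij}=0$ for $j>r$. Since $S_\varrho^*=\sum_{i=1}^n M_{v_i,u_i}$, for $x\in\varrho(A)$ one has $S_\varrho^*S_\varrho^{}x=\sum_{i,j}v_iu_j\,x\,v_ju_i=\bigl(\sum_{i,j}d_{ij}d_{ji}\bigr)x=\Tr(T^2)\,x$, whence $S_\varrho^*S_\varrho^{}\in\CC I$; this is unambiguous because $S_\varrho^*$ is the same for every minimal-length representation (cf.\ \cite[Theorem~5.1.7]{ArMa}).

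The genuinely delicate step is the last recombination in the second paragraph: one must identify correctly the contragredient transformation of the $\beta_i$ imposed by a recombination of the $\alpha_i$, verify that its effect on the block $C_{11}$ is a \emph{similarity} (so that a triangular form over $\CC$ is available), and check that it does not reintroduce nonzero entries in the columns already cleared. The rest is routine bookkeeping, although one should confirm at each step that the two coefficient tuples stay linearly independent, i.e.\ that the length remains~$n$.
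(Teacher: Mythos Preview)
Your proof is correct and follows essentially the same route as the paper's: deduce $\VSrho\subseteq\CC I$ from $\lDim\VpSrho=1$, pick $\zeta_0$ with $\Dim\LSrho\zeta_0=r$ and recombine so that the last $n-r$ left coefficients kill $\zeta_0$ (clearing the last $n-r$ columns of $(c_{ij})$), then apply an invertible change among the first $r$ left coefficients to triangularize the top-left $r\times r$ block. The only cosmetic difference is that the paper phrases the triangularization step as choosing a basis of $\LSrho\zeta$ in which the matrix of $S_\varrho\varrho(x)_{|\LSrho\zeta}$ (for any $x$ with $\varrho(x)\zeta=\zeta$) is triangular, whereas you argue directly that the recombination effects a similarity $(P^T)^{-1}C_{11}P^T$ on the scalar block; since that matrix of $S_\varrho\varrho(x)$ \emph{is} $C_{11}$ in the original basis, the two formulations are identical. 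Your explicit computation $S_\varrho^*S_\varrho^{}=\Tr(T^2)\,I$ and the remark that $S_\varrho^*$ is independent of the minimal-length representation are a welcome amplification of the paper's one-line ``It is clear''.
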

\begin{proof}
Write $S_\varrho=\sum_{i=1}^n {M_{a_i,b_i}}$ for some $a_i,b_i\in\varrho(A)$.
It follows from our assumption on $\VpSrho$ that
$b_ia_j\in \CC I $ for each~$i,j$. Hence $\VSrho\subseteq\CC I$.  Pick a vector $\zeta \in E$ such that $\LSrho \zeta$ has maximal dimension.
With no loss of generality, we may suppose that $\{a_1 \zeta, \ldots, a_r \zeta\}$ is linearly independent and
$a_i \zeta=0$ for $r+1 \leq i \leq n$ (compare the proof of \cite[Proposition~3.1]{NaMa13}).
Let $x\in A$ be such that $\varrho(x) \zeta=\zeta $.
Choose a basis  $\{u_1, \ldots, u_r\}$ of $\spa \{a_1, \ldots, a_r\}$ such that
$M\bigl(S_\varrho\varrho(x), \{u_1 \zeta, \ldots, u_r \zeta\}\bigr)$ is triangular.
Set $u_k= a_k$ for $k=r+1, \ldots, n$. Then $\RSrho u_k=0$ for all $k >r$.
It is clear  that $S_\varrho^* S_\varrho^{}\in \CC I$. The proof is complete.
\end{proof}

Our next result is the analogue of Theorem~3.3 in~\cite{NaMa13}.

\begin{thm}\label{spb-Ger}
Let $A$ be a unital Banach algebra, and let ${S}$ be  a spectrally bounded  elementary operator on~$A$.
Let $\varrho$ be an irreducible representation of $A$ on a Banach space~$E$.
Suppose that  $\lDim\LSrho=\Dim\LSrho=n$.
Then $\lDim\VpSrho\leq\frac{n (n-1)}{2}+1$. Moreover, if $\lDim\VpSrho= \frac{n(n-1)}{2}+1$,
then $S_\varrho$ admits  a representation of the form $S_\varrho=\sum_{i=1}^n M_{\varrho(u_i),\varrho(v_i)}$,
where $\varrho (v_i u_j) =0$  for every $i> j$ and $\varrho (v_i u_i) \in \CC I$.
In particular, $\Dim\VpSrho= \frac{n(n-1)}{2}+1$.
\end{thm}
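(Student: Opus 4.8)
The plan is to reduce the statement to a question about a linear subspace of the matrix algebra $M_n(\CC)$ and then invoke Gerstenhaber's classical theorem on spaces of nilpotent matrices, following the proof of \cite[Theorem~3.3]{NaMa13} but with Lemma~\ref{spb-nilp} in the role played there by local quasi-nilpotency. First I would replace $A$ by $\varrho(A)$ acting irreducibly on $E$ and, using Lemma~\ref{free}, fix a vector $\zeta\in E$ that is simultaneously a separating vector for $\LSrho$ and for $\VpSrho$; I would then write $S_\varrho=\sum_{i=1}^n M_{\varrho(a_i),\varrho(b_i)}$ with $\{\varrho(a_1),\dots,\varrho(a_n)\}$ linearly independent, so that $\{\varrho(a_1)\zeta,\dots,\varrho(a_n)\zeta\}$ is linearly independent too. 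For every $x\in A$ with $\varrho(x)\VSrho\zeta\subseteq\CC\zeta$ the subspace $\LSrho\zeta$ is $S_\varrho\varrho(x)$-invariant with matrix $B_x=\bigl(\pi(\varrho(xb_ia_j)\zeta)\bigr)_{1\le i,j\le n}$, and the key observation is that $B_x$ depends only on the functional induced by $\varrho(x)$ on the finite-dimensional space $\VSrho\zeta$. By Jacobson's density theorem every linear functional on $\VSrho\zeta$ arises this way, so $x\mapsto B_x$ factors through an injective linear map $(\VSrho\zeta)^{*}\to M_n(\CC)$; its image $\mathcal M$ therefore has dimension $\Dim\VSrho\zeta$, which is $\lDim\VpSrho$ if $\zeta\in\VSrho\zeta$ and $\lDim\VpSrho-1$ otherwise.

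Next I would feed in Lemma~\ref{spb-nilp}: whenever in addition $\varrho(x)\zeta=0$, the matrix $B_x$ is nilpotent. If $\zeta\notin\VSrho\zeta$ one may, for any prescribed functional on $\VSrho\zeta$, pick a representing $x$ with the extra normalisation $\varrho(x)\zeta=0$, so that \emph{every} element of $\mathcal M$ is nilpotent; if $\zeta\in\VSrho\zeta$ the same applies to the codimension-one subspace of functionals vanishing on $\zeta$, giving (by injectivity) a nilpotent subspace $\mathcal M_0\subseteq\mathcal M$ of dimension $\lDim\VpSrho-1$. Either way we obtain a subspace of $M_n(\CC)$ of dimension $\lDim\VpSrho-1$ all of whose elements are nilpotent, so Gerstenhaber's theorem yields $\lDim\VpSrho-1\le\frac{n(n-1)}{2}$, which is the first assertion.

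Assume now that $\lDim\VpSrho=\frac{n(n-1)}{2}+1$, so the nilpotent subspace just produced has the maximal possible dimension $\frac{n(n-1)}{2}$. By the structural part of Gerstenhaber's theorem it is conjugate, via an invertible matrix, to the space of strictly upper triangular matrices. Carrying this conjugation over to the basis $\{\varrho(a_1)\zeta,\dots,\varrho(a_n)\zeta\}$ of $\LSrho\zeta$ amounts to passing to a representation $S_\varrho=\sum_{i=1}^n M_{\varrho(u_i),\varrho(v_i)}$ obtained by replacing the $a_i$ with suitable linear combinations and the $b_i$ with the corresponding combinations (cf.\ the proof of Lemma~\ref{lem:various-repns}), with respect to which every admissible $B_x$ is strictly upper triangular with $(i,j)$-entry $\pi(\varrho(xv_iu_j)\zeta)$. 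Letting $\varrho(x)$ run over elements inducing all functionals on $\VSrho\zeta$ then forces $\varrho(v_iu_j)\zeta=0$ for $i\ge j$ when $\zeta\notin\VSrho\zeta$, and $\varrho(v_iu_j)\zeta\in\CC\zeta$ for $i\ge j$ when $\zeta\in\VSrho\zeta$. Once the operator identities $\varrho(v_iu_j)=0$ for $i>j$ and $\varrho(v_iu_i)\in\CC I$ have been established, one has $\VpSrho=\CC I+\spa\{\varrho(v_iu_j):i<j\}$, so $\Dim\VpSrho\le\frac{n(n-1)}{2}+1$; since also $\Dim\VpSrho\ge\lDim\VpSrho=\frac{n(n-1)}{2}+1$, the last assertion follows.

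The main obstacle is precisely this last upgrading: the normal form yields the relations only at the single separating vector $\zeta$ (and, when $\zeta\in\VSrho\zeta$, only the weaker information $\varrho(v_iu_j)\zeta\in\CC\zeta$ for $i>j$), whereas the theorem asserts identities between operators on all of $E$. I would handle this as in \cite[Theorem~3.3]{NaMa13}: if $S_\varrho$ is locally quasi-nilpotent in the sense of \cite{NaMa13} the conclusion is quoted verbatim from there; otherwise, using Proposition~\ref{spb-notqu} together with the decomposition recorded after Proposition~\ref{mult-hom}, one peels off from $S$ a length-one summand $S_1$ with $\varrho S_1$ a scalar multiple of a homomorphism, so that $S=S_1+S_2$ with $\ell(S_2)=n-1$, and then induces on $n$, the base case $n=1$ being Proposition~\ref{tr-spb}. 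Keeping the two sub-cases $\zeta\in\VSrho\zeta$ and $\zeta\notin\VSrho\zeta$ straight throughout this final step is the only genuinely delicate bookkeeping.
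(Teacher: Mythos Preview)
Your treatment of the first inequality $\lDim\VpSrho\le\frac{n(n-1)}{2}+1$ is essentially the paper's: fix a simultaneous separating vector via Lemma~\ref{free}, produce a linear subspace of $M_n(\CC)$ of dimension $\lDim\VpSrho-1$ consisting of nilpotent matrices (using Lemma~\ref{spb-nilp}), and invoke Gerstenhaber.

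In the equality case, however, your argument has two genuine gaps.

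\textbf{The induction via Proposition~\ref{spb-notqu} does not go through.} That proposition has the standing hypothesis $\Dim\VpSrho=\lDim\VpSrho$, which is part of the \emph{conclusion} you are after; at this stage you only know $\lDim\VpSrho=\frac{n(n-1)}{2}+1\le\Dim\VpSrho\le n^2+1$, so you cannot invoke it. Even granting the splitting $S=S_1+S_2$, the remainder $S_2$ need not be spectrally bounded (the spectral radius is not subadditive), and there is no reason why $S_2$ should again satisfy the equality hypothesis $\lDim\bm{V'}\mkern-2.5mu(S_{2,\varrho})=\frac{(n-1)(n-2)}{2}+1$ needed to continue the induction. The paper does not induct on~$n$: it argues directly, column by column as in \cite[Theorem~3.3]{NaMa13}, to obtain $\RSrho u_1\subseteq\CC I$ and then $\varrho(v_iu_j)\in\CC I$ for all $i\ge j$.

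\textbf{You omit the passage from $\varrho(v_iu_j)\in\CC I$ to $\varrho(v_iu_j)=0$ for $i>j$.} In the relevant case $\zeta\in\VSrho\zeta$, Gerstenhaber's structural theorem together with any correct upgrading gives only $\varrho(v_iu_j)\in\CC I$ for $i\ge j$; nothing you have written forces the off-diagonal scalars to vanish. This is a separate and substantial step in the paper: assuming some $\varrho(v_tu_l)\ne0$ with $t>l$, one first rearranges the representation so that $l$ is minimal and $t$ is the unique index with $\varrho(v_tu_l)\ne0$, then constructs explicit invertible $x_k$ and an element $x$ for which the matrix of $S_\varrho\varrho(x_k^{}xx_k^{-1})$ with respect to $\{\varrho(u_1)\zeta,\dots,\varrho(u_n)\zeta\}$ has characteristic polynomial of the form $Q+kR$ with $\deg R=n-2$ and $Q,R$ independent of~$k$. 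Letting $k\to\infty$ then contradicts $r(S(x_k^{}xx_k^{-1}))\le\spn{S}\,r(x)$. This is precisely where spectral boundedness is exploited beyond what Lemma~\ref{spb-nilp} already encodes, and it cannot be replaced by the dichotomy you propose.
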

\begin{proof}
Let $S_\varrho= \sum_{i=1}^n M_{\varrho(a_i),\varrho(b_i)}$ and $\lDim\VpSrho= r+1$.
Choose $\zeta\in E$ such that $\Dim\VpSrho\zeta= r+1$ and $\Dim\LSrho\zeta =n$ (Lemma~\ref{free}).
The case $r=0$ is trivial.
Suppose  that $r \neq 0$ and let $\{ \varrho (b_{i_t} a_{j_t}) \zeta \}_{1 \leq t \leq r } \cup \{ \zeta\}$
be a basis of $\VpSrho\zeta$. Pick $x_1, \ldots, x_r \in A$ with
\be
\varrho(x_k b_{i_t} a_{j_t}) \zeta = \delta_{kt} \zeta \quad \text {and} \quad \varrho (x_k) \zeta=0 \quad (1 \leq t, k \leq r).
\ee
Let $N$ be the vector subspace of $ M_n(\mathbb C)$ generated by
$M\bigl(S_\varrho\varrho (x_t), \{\varrho(a_1) \zeta, \ldots, \varrho (a_n) \zeta \}\bigr)$.
It follows from Lemma~\ref{spb-nilp} that $N$ is nilpotent. Since $\Dim N=r$, applying Gerstenhaber's theorem~\cite{Ger},
we get $r \leq \frac{n(n-1)}{2}$ and if $r=  \frac{n(n-1)}{2}$,
there exists a basis $B$ of $\LSrho\zeta$ such that $M(S_\varrho\varrho(x), B)$ is upper triangular for each
$x \in \spa \{ x_1, \ldots, x_r\}$.
Next suppose that $r= \frac{n(n-1)}{2}$  and set $B= \{\varrho(u_1) \zeta, \ldots, \varrho(u_n) \zeta  \}$.
Write  $\varrho{S}= \varrho\bigl(\sum_{i=1}^n M_{u_i,v_i}\bigr)$.
Since $M(S_\varrho\varrho(x), B)= (\pi \varrho(x v_i u_j) \zeta)_{1 \leq i,j \leq n}$, we have $\pi \varrho(x v_i u_j) \zeta=0$ for all
$i \geq j$ and $x \in \spa \{x_1, \ldots, x_r \}$.   Since $\Dim\VpSrho\zeta= \frac{n(n-1)}{2}+1$, the set
$ \{ \varrho(v_i u_j) \zeta: i <j \} \cup \{ \zeta \} $ is a basis of $\VpSrho\zeta$. Moreover,
$\varrho(v_i u_j ) \zeta \in \CC \zeta$ for all $i \geq j$. Arguing similarly to the proof of \cite[Theorem~3.3]{NaMa13},
we show that $\RSrho u_1\subseteq\CC I$ and deduce by induction that $\varrho(v_i u_j) \in \CC I$ for all $i \geq j$.
Thus $\Dim\VpSrho=\frac{n(n-1)}{2}+1$. Hence the set  $\{ \varrho(v_i u_j ): i <j \}\cup \{ I\} $ is a basis of $\VpSrho$,
and  using $\zeta$, we infer   that  $\varrho(v_iu_j ) \in \CC I$ for all $i \geq j$.

Next suppose that $\varrho(v_t u_l)\neq 0$  for some $t >l$ and let $l$ be the smallest integer satisfying this property.
With no loss of generality, we suppose that $\varrho(v_tu_l)= I$. Suppose there is $j > k \geq l$ with the property that
$\varrho(v_j u_l )\neq 0$ and $\varrho(v_k u_l) \neq 0$. Write
\be
\varrho{S}= \varrho (M_{u_1,v_1}+\ldots+M_{u_k,v_k- \lambda v_j}+\ldots +M_{u_j+ \lambda u_k,v_j}+ \ldots + M_{u_n,v_n})
\ee
for some complex number $\lambda$ satisfying $\varrho ((v_k- \lambda v_j)u_l)=0$.
Thus, by choosing the largest~$t$ with $\varrho(v_t u_l) \neq 0$, we may assume that $\varrho(v_k u_l) =0$ for $l \leq k $ and $k \neq t$.
Take $x_k \in\invA$, $x\in A$ such that
\be
\varrho( x_k )\zeta= \frac{1}{k} \varrho(v_l u_t )\zeta,   \; \varrho(x_k v_l u_t) \zeta= \zeta \ \text{and} \
\varrho(x_k v_i u_k) \zeta =\varrho(v_i u_k)\zeta \quad(i <k,\; (i,k) \neq (l,t))
\ee
and
\be
\varrho(x v_l u_t) \zeta=  \varrho(x) \zeta=   \varrho(v_l u_t) \zeta, \; \varrho(x v_i u_k) \zeta=0\quad(i <k,\; (i,k) \neq (l,t)).
\ee
We have
\be
\varrho( x_k^{} x x_k^{-1}) \zeta= \zeta, \; \varrho(x_k^{} x x_k^{-1} v_l u_t) \zeta= k \zeta
\ee
and
\be
\varrho(x_k{} x x_k^{-1} v_i u_k) \zeta=0\quad(i <k,\; (i,k) \neq (l,t)).
\ee
Let $M$ be  the matrix representation of $S_\varrho\varrho(x_k^{} x x_k^{-1})$ with respect to $\{\varrho(u_1)\zeta,\ldots,\varrho(u_n)\zeta\}$.
Then $M$ has the form
\be
 M=   \left(
        \begin{array}{ccc}
          D & 0 & 0\\
          0 & L & 0 \\
          0 & * & * \\
        \end{array}
      \right),
\ee
where $D$ is a diagonal matrix of order $l-1$, $L$ has order $t-l+1$ and is of the form
\be
S=   \left(
      \begin{array}{ccccccc}
        0 & 0&\cdots &0& k  \\
        0& * & \cdots&  0&0   \\
        \vdots& \vdots& \ddots&  0&0\\
        0&*&*&*&0\\
        1& *& *&  *&*\\
        \end{array}
    \right).
  \ee
Computing the characteristic polynomial along the row l, and using the fact that $v_ku_l=0$ for $k >l$ with $k \neq t$, we get
  \be
  P_M= Q+ k  R, \quad  \deg Q= n, \; \deg R=n-2,
\ee
and $Q,\, R$ do not depend on~$k$. Thus, the set
$\{\sigma(S_\varrho\varrho(yxy^{-1})): y\in\invA\}$ is not bounded,  a contradiction.
\end{proof}

\begin{cor}\label{spb-2}
Let $A$ be a unital Banach algebra, and let $S$ be a spectrally bounded elementary operator on $A$ of length~$2$.
Let $\varrho$ be an irreducible representation of $A$ on a Banach space~$E$.
Then either $S$ admits a representation of the form $S= M_{a,b}+ M_{c,d}$, where $\varrho(ba), \varrho(dc) \in \CC I$
and $\varrho(bc)=0$, or $\Dim E=2$ and $S_\varrho$ is similar to the map
\be
z\mapsto \left(
            \begin{array}{cc}
              \lambda  \Tr (z)& * \\
              0 & 0 \\
            \end{array}
          \right)  \quad\text{for some }\lambda\in\CC.
\ee
\end{cor}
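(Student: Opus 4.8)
The plan is to fix an irreducible representation $\varrho$ of $A$ on $E$, to fix a representation $S=M_{a_1,b_1}+M_{a_2,b_2}$ of $S$ with $\{a_1,a_2\}$ and $\{b_1,b_2\}$ linearly independent, and to split into cases according to the length of the induced operator $S_\varrho$ on the primitive algebra $\varrho(A)$. First, since a length-two elementary operator over a prime algebra necessarily has both coefficient sets linearly independent, the hypothesis $\ell(S_\varrho)\le1$ means that one of $\{\varrho(a_1),\varrho(a_2)\}$, $\{\varrho(b_1),\varrho(b_2)\}$ is linearly dependent; recombining the coefficients over $\CC$ — transforming the $b_i$ dually, as in \cite[Theorem~5.1.7]{ArMa} — one arrives at a representation $S=M_{a,b}+M_{c,d}$ with $\varrho(c)=0$ or $\varrho(b)=0$. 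In either case $\varrho(bc)=0$ and one of $\varrho(ba),\varrho(dc)$ vanishes; since their sum equals $\varrho(\bm{ba})\in\CC I$ by Proposition~\ref{tr-spb} and Lemma~\ref{lem:various-repns}, the remaining one lies in $\CC I$ as well, and the first alternative holds.

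Next, if $\ell(S_\varrho)=2$ and $\lDim\LSrho=\Dim\LSrho=2$, I would apply Theorem~\ref{spb-Ger} with $n=2$: this bounds $\lDim\VpSrho\le2$ and, in the extremal case $\lDim\VpSrho=2$, produces a representation $S_\varrho=M_{\varrho(u_1),\varrho(v_1)}+M_{\varrho(u_2),\varrho(v_2)}$ with $\varrho(v_2u_1)=0$ and $\varrho(v_1u_1),\varrho(v_2u_2)\in\CC I$; in the case $\lDim\VpSrho=1$ an analogous triangular representation comes instead from Lemma~\ref{lem:ldimV'=1}. Since $\{\varrho(a_1),\varrho(a_2)\}$ is linearly independent, the $u_i,v_i$ may be replaced by $\CC$-linear combinations of the $a_j,b_j$ having the same images under $\varrho$, so that $S=M_{u_1,v_1}+M_{u_2,v_2}$ holds already in $A$; relabelling the two summands so that the vanishing cross term becomes $\varrho(bc)$, one again lands in the first alternative.

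It remains to handle $\ell(S_\varrho)=2$ with $\lDim\LSrho<\Dim\LSrho=2$, which is the delicate case and the one producing the exceptional form. Here $\LSrho$ is a two-dimensional locally linearly dependent space of operators, and I would use the structure of such spaces (see~\cite{BrSe}) to conclude that all its members have rank at most one with a common one-dimensional range $\CC\eta$; thus $\varrho(a_i)\zeta=f_i(\zeta)\eta$ for suitable bounded functionals $f_i$ with $\{f_1,f_2\}$ linearly independent. Setting $\eta_i=\varrho(b_i)\eta$, the operator $\varrho(b_ia_i)$ sends $\zeta$ to $f_i(\zeta)\eta_i$, so Proposition~\ref{tr-spb} forces
\[
\varrho(b_1a_1)+\varrho(b_2a_2)=\lambda I\qquad\text{for some }\lambda\in\CC .
\]
Now I would branch on whether $\{\eta_1,\eta_2\}$ is linearly dependent. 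If it is, choose $(\beta_1,\beta_2)\ne(0,0)$ with $\beta_1\eta_1+\beta_2\eta_2=0$ and pass to the representation $S=M_{a,b}+M_{c,d}$ with $\varrho(b)=\beta_1\varrho(b_1)+\beta_2\varrho(b_2)$; then $\varrho(b)$ annihilates $\CC\eta$, which contains the ranges of $\varrho(a)$ and $\varrho(c)$, whence $\varrho(ba)=\varrho(bc)=0$ and $\varrho(dc)=\varrho(\bm{ba})=\lambda I$ — the first alternative. If $\{\eta_1,\eta_2\}$ is linearly independent, then $\varrho(b_1a_1)+\varrho(b_2a_2)$ is nonzero (otherwise $\eta_1,\eta_2$ would be proportional), so $\lambda\ne0$ and $\lambda I$ has rank $\Dim E$; being a sum of two operators of rank at most one it has rank at most $2$, so $\Dim E\le2$, and $\Dim E=1$ is impossible because $\ell(S_\varrho)=2$. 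Hence $\Dim E=2$, $f_i(\eta_j)=\lambda\delta_{ij}$, and a short computation shows $S_\varrho(z)\zeta=\phi_z(\zeta)\eta$ with $\phi_z(\eta)=\lambda\Tr(z)$; choosing a basis of $E$ whose first vector is $\eta$ then puts $S_\varrho$ in the stated matrix form, which is the second alternative.

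The main obstacle is this last case: one must first bring in the classification of locally linearly dependent operator spaces to pin down the left-hand coefficients, and then use the scalar relation $\varrho(b_1a_1)+\varrho(b_2a_2)\in\CC I$ supplied by Proposition~\ref{tr-spb} to exploit the freedom still available in the right-hand coefficients and decide whether the first alternative can be reached or the two-dimensional exceptional form is forced.
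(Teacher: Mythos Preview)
Your argument is correct and follows the paper's case split on $\ell(S_\varrho)$ and $\lDim\LSrho$; for $\ell(S_\varrho)\le1$ and for $\lDim\LSrho=2$ you do essentially what the paper does (the paper cites \cite{CuMa} for the short-length case rather than arguing directly, and otherwise invokes Theorem~\ref{spb-Ger} and Lemma~\ref{lem:ldimV'=1} just as you do).

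The genuine difference is in the case $\ell(S_\varrho)=2$, $\lDim\LSrho=1$. The paper treats $\Dim E\ge3$ and $\Dim E=2$ separately: for $\Dim E\ge3$ it returns to the definition of spectral boundedness, constructing invertible $x_k$ and an auxiliary $y$ so that $S_\varrho\varrho(x_k^{}yx_k^{-1})\eta=k\eta$, and thereby forces the stronger conclusion $\VSrho=0$ (i.e.\ $\varrho(b_i)\eta=0$ for both $i$); for $\Dim E=2$ it performs a direct matrix computation after normalising $\varrho(a_1)=e_{11}$, $\varrho(a_2)=e_{12}$. You instead use only the already-established Proposition~\ref{tr-spb} and branch on whether $\{\varrho(b_1)\eta,\varrho(b_2)\eta\}$ is linearly dependent: if dependent, a recombination of the right coefficients gives the first alternative; if independent, the rank bound on $\eta_1\otimes f_1+\eta_2\otimes f_2=\lambda I$ forces $\Dim E\le2$ and you read off the exceptional form. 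Your route is shorter and avoids the $x_k$-machinery; the paper's route gives more information (all of $\VSrho$ vanishes, not merely a suitable cross term), which is not needed for the corollary itself but is in line with the stronger statements obtained later (e.g.\ Corollary~\ref{spb-2-matrix}).
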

\begin{proof}
If $S_\varrho$ has length~$1$, the desired conclusion follows from~\cite{CuMa}; thus we can assume that $\ell(S_\varrho)=2$.
Suppose that $\lDim\LSrho=2$. The case $\lDim\VpSrho=1$ follows from Lemma~\ref{lem:ldimV'=1} and in the other case,
the result follows from Theorem~\ref{spb-Ger}.
Finally suppose  that $\lDim\LSrho= 1$ and set $S_\varrho= \varrho\bigl(M_{a_1,b_1}+M_{a_2,b_2}\bigr)$.
Since $\{\varrho(a_1),\varrho( a_2)\}$ is linearly independent, there exist $\eta \in E$ and $f_1, f_2 \in E^*$
such that $\varrho(a_i)= \eta \otimes f_i$, $i=1,2$; \cite[Theorem~2.3]{BrSe}. Suppose first that $\Dim E \geq 3$.
We claim that $\VSrho=0$. Indeed, suppose for instance that $\rho(b_1)\eta\neq0$.  Then there is  $\zeta \in E$
such that $f_1(\zeta)=1$, $f_2 (\zeta)=0$, and
$\{\zeta, \varrho(b_1) \eta\}$ is linearly independent. Choose $x_k, x \in A$, $k\in\N$ with $x_k$ invertible  satisfying
\begin{equation*}
\varrho(x_k)\zeta=\frac{1}{k}b_1\eta, \; \varrho(x_k b_1) \eta=\zeta, \; \varrho(x) \zeta=b_1 \eta,\;
\varrho(x b_1) \eta=0\text{ \ and \ }\varrho(x_k^{} x x_k^{-1} b_2) \eta \in \CC \zeta.
\end{equation*}
Then $S_\varrho\varrho(x_k^{}xx_k^{-1}) \eta= k\varrho(a_1)\zeta=k \eta$, a contradiction.
Thus $\varrho(b_i a_j)=0$ for all $i,j$.

Next suppose that  $\Dim E=2$ and thus $\rho (A)=M_2 (\mathbb C)$. Since $\lDim\LSrho=1$, we may assume that
$\rho (a_1)= e_{11}$ and $\rho (a_2)= e_{12}$. Hence $S_\varrho=M_{e_{11},b}+M_{e_{12},d}$ for some $b,d\in M_2(\CC)$.
Using Proposition~\ref{tr-spb}, we infer that the matrices $b$ and $d$ have the form
\be
b= \left(
     \begin{array}{cc}
       \lambda & * \\
       0 & * \\
     \end{array}
   \right), \;\;\;
d= \left(
     \begin{array}{cc}
       0 & * \\
       \lambda & * \\
     \end{array}
   \right)  \quad\text{for some }\lambda\in\CC.
\ee
A straightforward computation yields the desired result and completes the proof.
\end{proof}

\begin{cor}\label{Gspb-2}
Let $A$ be a unital Banach algebra, and let $S$ be a spectrally bounded elementary operator on $A$ of length~$2$.
Let $\varrho$ be an irreducible representation of $A$ on a Banach space~$E$.
If $\Dim E\ne2$ then $S^*_\varrho S^{}_\varrho\in\CC I$; if $\Dim E=2$ then either $S^*_\varrho S^{}_\varrho\in\CC I$
or $S^*_\varrho S^{}_\varrho\in\CC(I\otimes\Tr)$.
In either case, $[{S}^* {S} x, x]\in \rada$ for all $x \in A$.
\end{cor}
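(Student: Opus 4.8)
The plan is to read the structure of $S_\varrho$ off Corollary~\ref{spb-2}, compute $S^*_\varrho S^{}_\varrho$ by hand in each of the two alternatives listed there, and then pass to all irreducible representations to deduce the commutator identity. Throughout, $\varrho$ denotes an arbitrary irreducible representation of $A$ on~$E$.

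First I would invoke Corollary~\ref{spb-2} and suppose its first alternative holds, so $S=M_{a,b}+M_{c,d}$ with $\varrho(ba),\varrho(dc)\in\CC I$ and $\varrho(bc)=0$. Expanding the composition,
\[
S^*_\varrho S^{}_\varrho(x)=\varrho(ba)\,x\,\varrho(ba)+\varrho(da)\,x\,\varrho(bc)+\varrho(bc)\,x\,\varrho(da)+\varrho(dc)\,x\,\varrho(dc);
\]
since $\varrho(bc)=0$ the two middle terms vanish, while the outer two each reduce to a scalar multiple of~$x$, so $S^*_\varrho S^{}_\varrho\in\CC I$. (A short verification, using that the coefficient sets of a length-two representation are linearly independent, shows that $S^*_\varrho S^{}_\varrho$ equals the induced operator $(S^*S)_\varrho$ and does not depend on the chosen length-two representation of~$S$, so there is no ambiguity; in particular we may always use the representation furnished by Corollary~\ref{spb-2}.) Because the second alternative of Corollary~\ref{spb-2} forces $\Dim E=2$, this already settles the case $\Dim E\neq2$.

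It remains to treat the exceptional two-dimensional case, which I would handle by the explicit matrix data inside the proof of Corollary~\ref{spb-2}. After replacing $\varrho$ by a conjugate representation $\mathrm{Ad}(w)\circ\varrho$ --- harmless because $S^*_\varrho$, and not merely $S^{}_\varrho$, is then conjugated by the same $\mathrm{Ad}(w)$, and both $\CC I$ and $\CC(I\otimes\Tr)$ are invariant under such conjugation --- one may assume $\varrho(a_1)=e_{11}$, $\varrho(a_2)=e_{12}$ and that $\varrho(b_1),\varrho(b_2)$ have the triangular shapes exhibited there, carrying the common scalar $\lambda$ in their $(1,1)$ and $(2,1)$ entries respectively. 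A direct $2\times2$ computation then gives $S^{}_\varrho(x)=\lambda\,\Tr(x)\,e_{11}+(\cdots)\,e_{12}$, and feeding a matrix of this shape (whose second row vanishes) into $S^*_\varrho$ yields $S^*_\varrho S^{}_\varrho(x)=\lambda^2\,\Tr(x)\,I$, that is, $S^*_\varrho S^{}_\varrho=\lambda^2(I\otimes\Tr)\in\CC(I\otimes\Tr)$.

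For the last assertion I would argue one irreducible representation at a time: for any such $\varrho$ we have $\varrho(S^*Sx)=(S^*S)_\varrho(\varrho(x))=S^*_\varrho S^{}_\varrho(\varrho(x))$, which by the above is either a scalar multiple of $\varrho(x)$ or a scalar multiple of~$I$; in both cases it commutes with $\varrho(x)$, so $\varrho([S^*Sx,x])=0$. As this holds for every irreducible representation of $A$ and $\rada=\bigcap_\varrho\ker\varrho$, we conclude $[S^*Sx,x]\in\rada$ for all $x\in A$. The step I expect to be most delicate is the $2\times2$ bookkeeping in the preceding paragraph: checking that the change of basis is legitimate for $S^*_\varrho$ as well as for $S^{}_\varrho$, and tracking exactly which matrix entries survive, so that the product genuinely lands in $\CC(I\otimes\Tr)$ rather than only in $\CC I$ together with a lower-order correction.
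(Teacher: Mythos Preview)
Your proof is correct and follows essentially the same route as the paper: invoke Corollary~\ref{spb-2}, compute $S^*_\varrho S^{}_\varrho$ explicitly in each of its two alternatives, and then pass to all irreducible representations for the commutator statement. Your parenthetical remarks (that $S^*_\varrho S^{}_\varrho$ is independent of the chosen length-two representation, and that the similarity in the two-dimensional case conjugates $S^*_\varrho$ along with $S^{}_\varrho$) make explicit two points the paper's proof leaves to the reader, but otherwise the arguments coincide.
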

\begin{proof}
Suppose first that $S_\varrho$ admits a representation of the form $M_{a,b}+M_{c,d}$ such that $\varrho(ba),\varrho(dc)\in\CC I$ and $\varrho(bc)=0$.
Then, for every $z\in\varrho(A)$, $S_\varrho^*S^{}_\varrho z\in\CC z$ and hence $S_\varrho^*S^{}_\varrho\in\CC I$.
On account of Corollary~\ref{spb-2}, if $S_\varrho$ does not have such a representation, then $\Dim E=2$
and $S_\varrho$ can be written in the form $S_\varrho=M_{e_{11},b}+M_{e_{12},d}$, using the same notation
as in the proof of Corollary~\ref{spb-2}. It follows that
\begin{equation*}
S^*_{\varrho} S^{}_{\varrho} z= \left(\begin{array}{cc}
                                                                                \lambda^2 \Tr(z) & 0 \\
                                                                                0 & \lambda^2 \Tr(z) \\
                                                                                \end{array}\right)
                                                                = (\lambda^2\otimes\Tr)(z)\qquad(z\in\varrho(A))
\end{equation*}
and hence $S^*_{\varrho} S^{}_{\varrho}\in\CC I\otimes\Tr$. In either case, $[{S}^* {S} x, x]\in \rada$ for all $x \in A$.
\end{proof}
\begin{cor}\label{spb-2-matrix}
Let ${S}$ be a spectrally bounded  elementary operator of length~$2$ on the matrix algebra $A=M_n (\CC)$,
where $n\geq 3$. Then $(Sx)^3=0$ for every $x \in A$  and ${S}^* {S} =0$.
\end{cor}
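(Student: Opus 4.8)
The plan is to specialise the length-two structure theory developed above to the simple algebra $A=M_n(\CC)$. Here $\rada=0$, the centre is $Z(A)=\CC I$, and up to equivalence the only irreducible representation of~$A$ is the identity representation $\varrho=\textup{id}$ on $E=\CC^n$, so that $\Dim E=n\geq3$. First I would apply Corollary~\ref{spb-2} with this~$\varrho$: since $\Dim E\ne2$, the exceptional alternative is excluded, so $S$ admits a representation $S=M_{a,b}+M_{c,d}$ with $ba,\,dc\in\CC I$ and $bc=0$; write $ba=\alpha I$ and $dc=\beta I$ with $\alpha,\beta\in\CC$. Because $\ell(S)=2$, neither $\{a,c\}$ nor $\{b,d\}$ can be linearly dependent (a dependence would collapse $S$ into a length-one operator), so both sets are linearly independent.

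The only step that is not pure bookkeeping is to show $\alpha=\beta=0$. If $\alpha\ne0$, then $\alpha^{-1}b$ is a left inverse of the square matrix~$a$, hence $a\in\invA$ and $b=\alpha a^{-1}\in\invA$; but then $bc=0$ forces $c=0$, contradicting the linear independence of $\{a,c\}$. Symmetrically, $\beta\ne0$ would make $c$, and hence $d$, invertible, and $bc=0$ would then force $b=0$, contradicting the linear independence of $\{b,d\}$. Therefore $ba=bc=dc=0$.

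With these three products vanishing, both assertions follow by a one-line expansion. Writing $S^*=M_{b,a}+M_{d,c}$, for every $x\in A$,
\[
S^*Sx=(ba)x(ba)+(bc)x(da)+(da)x(bc)+(dc)x(dc)=0,
\]
so $S^*S=0$. Similarly, since $ba=bc=dc=0$,
\[
(Sx)^2=(axb+cxd)^2=ax(ba)xb+ax(bc)xd+cx(da)xb+cx(dc)xd=cx(da)xb,
\]
and multiplying once more by $Sx=axb+cxd$ gives $(Sx)^3=cx(da)x(ba)xb+cx(da)x(bc)xd=0$ for every $x\in A$. I expect no genuine obstacle: the whole content sits in the invertibility argument of the second paragraph, and the hypothesis $n\geq3$ is needed only to guarantee $\Dim E\ne2$, which is what lets Corollary~\ref{spb-2} hand us the good representation of~$S$.
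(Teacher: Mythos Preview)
Your argument is correct and follows essentially the same route as the paper's proof: apply Corollary~\ref{spb-2} (the case $\Dim E=2$ being excluded since $n\geq3$), then use the invertibility argument in $M_n(\CC)$ together with $bc=0$ and $\ell(S)=2$ to force $ba=dc=0$. The only cosmetic difference is that the paper obtains $(Sx)^3=0$ by invoking \cite[Corollary~2.5]{NaMa13}, whereas you simply expand $(Sx)^2=cx(da)xb$ and multiply once more; your direct computation is self-contained and equally short.
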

\begin{proof}
By Corollary~\ref{spb-2}, $S$ has a representation of the form $S=M_{a,b}+M_{c,d}$ such that $ba, dc \in\CC I$ and $bc=0$.
If $ba\neq 0$ then $b$ is invertible, contradicting $bc=0$. Analogously for $dc$. Thus $ba=dc=bc=0$.
The first assertion hence follows from \cite[Corollary~2.5]{NaMa13} while the second is straightforward.
\end{proof}

\section{Spectrally bounded elementary operators of length~$3$}\label{sect:length3}

\noindent
Like in the purely algebraic setting, \cite{NaMa13},
we have more precise information available for elementary operators of length~$3$. The first result is simply a translation
of Corollary~4.2 in~\cite{NaMa13} into the context of Banach algebras.

\begin{prop}
Let $A$ be a unital Banach algebra, and let ${S}$ be a spectrally infinitesimal elementary operator on $A$ of length~$3$.
Then ${S}^* {S} (A) \subseteq\rada$ and $(Sx)^5 \in \rada$ for every $x \in A$.
\end{prop}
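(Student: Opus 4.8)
The plan is to pass to irreducible representations and then quote the purely algebraic classification of locally quasi-nilpotent elementary operators of length three in \cite[Corollary~4.2]{NaMa13}; as announced, the present statement is little more than its Banach-algebraic reformulation.

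First I would fix an irreducible representation $\varrho$ of $A$ on a Banach space $E$ and form the induced elementary operator $S_\varrho\colon\varrho(A)\to\varrho(A)$ determined by $S_\varrho\circ\varrho=\varrho\circ S$. Since $\varrho$ is a unital homomorphism into $\LE$, we have $r\bigl(S_\varrho\varrho(x)\bigr)=r\bigl(\varrho(Sx)\bigr)\le r(Sx)=0$ for every $x\in A$, so $S_\varrho$ is again spectrally infinitesimal, and of course $\ell(S_\varrho)\le\ell(S)\le 3$. By Jacobson's density theorem $\varrho(A)$ is a dense algebra of operators on $E$, which is precisely the setting in which the results of \cite{NaMa13} are available for $S_\varrho$.

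Next I would check that $S_\varrho$ is locally quasi-nilpotent in the sense of \cite{NaMa13}. This is the observation already used in the proof of Proposition~\ref{prop:infini_elops} (and it is the $\spn S=0$ instance of Lemma~\ref{spb-nilp}, now without needing the auxiliary condition $\varrho(x)\zeta=0$): whenever $\zeta\in E$ and $x\in A$ satisfy $\varrho(x)\VSrho\zeta\subseteq\CC\zeta$, the operator $\varrho(Sx)=S_\varrho\varrho(x)$ leaves the finite-dimensional subspace $\LSrho\zeta$ invariant, and the spectrum of its restriction to that subspace consists of eigenvalues of $\varrho(Sx)$, hence lies in $\sigma_{\LE}(\varrho(Sx))=\{0\}$; so the restriction is nilpotent. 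With this hypothesis in hand, \cite[Corollary~4.2]{NaMa13} yields $S_\varrho^*S_\varrho^{}=0$ and $(S_\varrho z)^5=0$ for all $z\in\varrho(A)$ when $\ell(S_\varrho)=3$; the cases $\ell(S_\varrho)\le 2$, which may occur because the length can drop in a quotient, are covered by the corresponding lower-length statements of \cite{NaMa13} (for instance \cite[Corollary~2.5]{NaMa13}, together with Proposition~\ref{prop:infini_elops} for length one).

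Finally I would transport the conclusions back to $A$. Writing $S=S_{\bm a,\bm b}$ we have $S_\varrho=S_{\varrho(\bm a),\varrho(\bm b)}$ and $S^*=S_{\bm b,\bm a}$, so $(S^*)_\varrho=(S_\varrho)^*$, and since $T\mapsto T_\varrho$ respects composition it follows that $(S^*S)_\varrho=S_\varrho^*S_\varrho^{}$. Consequently $\varrho(S^*Sx)=S_\varrho^*S_\varrho^{}\,\varrho(x)=0$ and $\varrho\bigl((Sx)^5\bigr)=\bigl(S_\varrho\varrho(x)\bigr)^5=0$ for every $x\in A$ and every irreducible representation~$\varrho$. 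Since the intersection of the kernels of all irreducible representations of $A$ equals $\rada$, we obtain $S^*S(A)\subseteq\rada$ and $(Sx)^5\in\rada$ for all $x\in A$, as claimed. The only genuinely delicate point is to match the hypotheses of \cite[Corollary~4.2]{NaMa13} precisely and to dispose separately of the degenerate low-length (or low-dimensional) situations that a primitive quotient can produce; all the substantive content lies in that corollary, so what remains here is essentially bookkeeping.
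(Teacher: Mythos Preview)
Your proposal is correct and follows precisely the route the paper indicates: the paper gives no proof beyond the remark that the proposition ``is simply a translation of Corollary~4.2 in~\cite{NaMa13} into the context of Banach algebras,'' and what you have written is exactly that translation, carried out by reducing to primitive quotients via irreducible representations, verifying the local-nilpotency hypothesis of~\cite{NaMa13} from spectral infinitesimality (as in Proposition~\ref{prop:infini_elops}), invoking \cite[Corollary~4.2]{NaMa13} (together with the lower-length cases when $\ell(S_\varrho)<3$), and pulling back through the radical. There is nothing to add or correct.
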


In order to treat the more general situation of spectrally bounded elementary operators,
we first need an auxiliary result.

\begin{lem}\label{lem:suff-cond}
Let $E$ be a Banach space, and let $A$ be a closed irreducible algebra of bounded linear operators on~$E$.
Let ${S}=\sum_{i=1}^3 M_{u_i,v_i}$, $u_i, v_i \in \LE$  be an elementary operator of length~$3$ on~$A$
such that one of the following two cases occurs:

\smallskip
\begin{enumerate}
\item[{\rm (i)}] $(v_iu_j)_{1 \leq i,j \leq 3}= \left(\begin{array}{ccc}
                                             \lambda I & \zeta_1 \otimes f & 0 \\
                                             \zeta_0 \otimes f & \lambda I & \zeta_1 \otimes f \\
                                             0 & - \zeta_0 \otimes f &\lambda I
                                           \end{array}\right); \;  \lambda \in \CC, \; \zeta_0, \zeta_1 \in E \text { and } f \in E^*; $
\smallskip
\item[{\rm (ii)}]                                        $(v_iu_j)_{1 \leq i,j \leq 3}= \left(\begin{array}{ccc}
                                             \lambda I & \zeta_0 \otimes g & 0 \\
                                             \zeta_0 \otimes f & \lambda I & \zeta_0 \otimes g \\
                                             0 & - \zeta_0 \otimes f & \lambda I
                                           \end{array}\right); \; \lambda \in \CC, \;  \zeta_0 \in E \text { and } f,g \in E^*,$
\end{enumerate}
where $E^*$ denotes the dual of~$E$. Then ${S}$ is spectrally bounded.
\end{lem}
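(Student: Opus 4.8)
The plan is to reduce both cases to an application of Corollary~\ref{cor:sufficient-cond}. That corollary says that if an elementary operator of length $n$ on a unital Banach algebra can be written as $S=\sum_{i=1}^n M_{u_i,v_i}$ with $v_iu_j\in\rada$ for $i>j$ and $v_iu_i\in\mathcal Z(A)$ for all~$i$, then $S$ is spectrally bounded. Here $A$ is a closed irreducible algebra of operators on~$E$, so $\rada=0$ and $\mathcal Z(A)=\CC I$; the displayed matrices show that the diagonal entries are $\lambda I\in\CC I$, which is fine, but the below-diagonal entries $v_2u_1$, $v_3u_2$ are of the form $\pm\zeta_0\otimes f$ and need not vanish. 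So the given representation is not yet in the triangular form required by Corollary~\ref{cor:sufficient-cond}, and the first — and main — step is to produce a better representation by a linear change of the coefficients, exactly the kind of manoeuvre licensed by Lemma~\ref{lem:various-repns} (keeping $u_i\in\LS$, $v_i\in\RS$).

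The key observation is that the strictly-lower-triangular part of $(v_iu_j)$ lies in a single rank-one ``direction'' $\zeta_0\otimes(\,\cdot\,)$, while in case~(i) the strictly-upper part lies in the direction $\zeta_1\otimes f$ and in case~(ii) in $\zeta_0\otimes g$. I would look for scalars so that, replacing $(u_1,u_2,u_3,v_1,v_2,v_3)$ by suitable $\CC$-linear combinations of themselves, the new matrix $(v'_iu'_j)$ becomes upper triangular with scalar diagonal. Concretely: in case~(i) one can try to absorb the $(2,1)$ and $(3,2)$ entries by adding a multiple of row/column~$3$ to row/column~$1$ (note the opposite signs $\pm\zeta_0\otimes f$ are what make cancellation possible), reducing to a strictly-upper-triangular correction term plus the diagonal $\lambda I$; in case~(ii), since both off-diagonal directions involve $\zeta_0$, one has even more room. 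After such a substitution one is left with $S=\lambda S_0+S_1$, where $S_0$ is a homomorphism-like block (its matrix $(v'_iu'_i)_{ii}=I$, so $\lambda S_0$ is a scalar multiple of a homomorphism on $A$) and $S_1$ has a strictly upper-triangular matrix, hence satisfies the hypotheses of Corollary~\ref{cor:sufficient-cond}. Alternatively, and perhaps more cleanly, one checks directly that after the change of variables the triangularity conditions $v'_iu'_j=0$ for $i>j$ and $v'_iu'_i\in\CC I$ hold, and invokes Corollary~\ref{cor:sufficient-cond} in one shot.

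The step I expect to be the genuine obstacle is verifying that the linear change of coefficients can be carried out \emph{simultaneously} for the left family $\bm u$ and the right family $\bm v$ in a way consistent with the structure constraint $\bm{vu}$-matrix has the prescribed form: changing $u_i\mapsto\sum_k\alpha_{ik}u_k$ forces, by Lemma~\ref{lem:various-repns} (via \cite[Theorem~5.1.7]{ArMa}), a transpose-inverse change $v_k\mapsto\sum_i\alpha_{ik}v_i$ on the right, and one must check that the resulting transformed matrix $(v'_iu'_j)$ is genuinely upper triangular — this is a finite but slightly delicate $3\times3$ bookkeeping computation, using the precise positions of the zeros and the sign pattern in the two displayed matrices. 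Once triangularity with scalar diagonal is confirmed, spectral boundedness of $S$ is immediate from Corollary~\ref{cor:sufficient-cond}, with $\spn S\le 1+|\lambda|$ (or a similarly explicit bound). I would write out case~(i) in detail and remark that case~(ii) follows by the same argument, the computation being if anything simpler because both off-diagonal directions share the vector~$\zeta_0$.
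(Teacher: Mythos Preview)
Your reduction to Corollary~\ref{cor:sufficient-cond} cannot be carried out: the required simultaneous triangularisation is impossible in general. In case~(i), write the off-diagonal part of $(v_iu_j)$ as $N=A\,(\zeta_0\otimes f)+B\,(\zeta_1\otimes f)$ with the scalar patterns
\[
A=\begin{pmatrix}0&0&0\\1&0&0\\0&-1&0\end{pmatrix},\qquad
B=\begin{pmatrix}0&1&0\\0&0&1\\0&0&0\end{pmatrix}.
\]
A change of representation as in Lemma~\ref{lem:various-repns} replaces $(v_iu_j)$ by $Q^{-1}(v_iu_j)Q$ for some $Q\in GL_3(\CC)$, so to obtain $v'_iu'_j=0$ for $i>j$ and $v'_iu'_i\in\CC I$ (note that rank-one operators are never scalar when $\Dim E\geq2$) you would need $Q^{-1}AQ$ and $Q^{-1}BQ$ to be \emph{both} strictly upper triangular, provided $\zeta_0,\zeta_1$ are linearly independent and $f\ne0$. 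But
\[
[A,B]=\begin{pmatrix}-1&0&0\\0&2&0\\0&0&-1\end{pmatrix}
\]
is not nilpotent, so $A$ and $B$ are not simultaneously triangularisable. The same obstruction appears in case~(ii) with $f,g$ linearly independent, since the scalar patterns are identical. The ``slightly delicate $3\times3$ bookkeeping'' you anticipate is therefore not delicate but impossible.

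The paper's argument is entirely different and exploits the specific sign pattern rather than trying to remove it. One checks directly that $S^*1=3\lambda$ and $S^*S=3\lambda^2\,I$ (the cross terms cancel in pairs because of the $\pm$ in the $(2,1)$ and $(3,2)$ positions). Hence $S^*(Sx-\alpha\,1)=3\lambda(\lambda x-\alpha\,1)$ for every $x\in A$ and $\alpha\in\CC$. For $\lambda\ne0$, if $\alpha\in\partial\sigma(Sx)$ then $Sx-\alpha\,1$ is a topological divisor of zero, whence $\lambda x-\alpha\,1$ cannot be invertible; thus $\partial\sigma(Sx)\subseteq\sigma(\lambda x)$ and $r(Sx)\le|\lambda|\,r(x)$. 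The case $\lambda=0$ is handled separately via~\cite[Theorem~4.1]{NaMa13}. Case~(ii) is identical.
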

\begin{proof}
Case (i): If $\lambda=0$, we are done by  \cite[Theorem 4.1]{NaMa13}.
Suppose next that $\lambda\neq0$. A straightforward computation shows that $S^*1=3\lambda$ and $S^* S= 3 \lambda^2  I$.
Let $x\in A$ and let $\alpha \in\partial\sigma (Sx)$. Then
\begin{equation*}
S^* (Sx- \alpha 1)= 3 \lambda (\lambda x -\alpha 1).
\end{equation*}
Since $Sx- \alpha 1$ is a topological divisor of zero, we infer that $\alpha\in\sigma(\lambda x)$.
It follows that $r(Sx) \leq  |\lambda|r(x)$ for all $x \in A$, as desired.

\smallskip\noindent
Case (ii). This case is treated analogously.
\end{proof}

The following is the main result of this section.

\begin{thm}\label{3-spb}
Let  $E$ be a Banach space  with $\Dim E\geq 4$, and let $A$ be a closed irreducible algebra of bounded linear operators on~$E$.
Let  ${S}\colon A \rightarrow \LE$ be an elementary operator of length~$3$.
Then ${S}$ is spectrally bounded  if and only if there exists a representation of ${S}$ of the form
\begin{equation}
{S}= \sum_{i=1}^3 M_{u_i,v_i},\quad u_i, v_i \in \LE
\end{equation}
such that one of the following three cases occurs:

\smallskip
\begin{enumerate}
\item[{\rm (i)}] $v_i u_j=0$ for all $i > j$ and $v_i u_i \in \CC I$ for all~$i$.
\smallskip
\item[{\rm (ii)}] $(v_iu_j)_{1 \leq i,j \leq 3}= \left(\begin{array}{ccc}
                                             \lambda I & \zeta_1 \otimes f & 0 \\
                                             \zeta_0 \otimes f & \lambda I & \zeta_1 \otimes f \\
                                             0 & - \zeta_0 \otimes f &\lambda I
                                           \end{array}\right); \;  \lambda \in \CC, \; \zeta_0, \zeta_1 \in E \text { and } f \in E^*; $
\smallskip
\item[{\rm (iii)}] $(v_iu_j)_{1 \leq i,j \leq 3}= \left(\begin{array}{ccc}
                                             \lambda I & \zeta_0 \otimes g & 0 \\
                                             \zeta_0 \otimes f & \lambda I & \zeta_0 \otimes g \\
                                             0 & - \zeta_0 \otimes f & \lambda I
                                           \end{array}\right); \; \lambda \in \CC, \;  \zeta_0 \in E \text { and } f,g \in E^*. $
\end{enumerate}
\end{thm}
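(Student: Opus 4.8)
The ``if'' direction is essentially done by the results already assembled: case~(i) is Corollary~\ref{cor:sufficient-cond} (after noting that $v_iu_i\in\CC I$ certainly lies in $\mathcal Z(A)$ and $v_iu_j=0$ for $i>j$ is the required upper-triangularity), while cases~(ii) and~(iii) are exactly Lemma~\ref{lem:suff-cond}. So the bulk of the work is the ``only if'' direction, and the strategy is to run the analysis of Section~\ref{sect:spbdd} specialised to $n=3$, using that $A$ already acts irreducibly on $E$ (so $\varrho=\mathrm{id}$, $S_\varrho=S$, and we may write $\VS$, $\VpS$, $\LS$ instead of their $\varrho$-decorated versions). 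First I would dispose of the degenerate lengths: if $\ell(S)\le 2$ then, since $\Dim E\ge4\ne2$, Corollary~\ref{spb-2} forces a representation $S=M_{a,b}+M_{c,d}$ with $ba,dc\in\CC I$ and $bc=0$, which is case~(i) with $u_3=v_3=0$ (or handled directly). So assume $\ell(S)=3$.

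\textbf{The main case analysis.} With $\ell(S)=3$, split according to $\lDim\LS$. If $\lDim\LS=3=\Dim\LS$, then Theorem~\ref{spb-Ger} gives $\lDim\VpS\le \frac{3\cdot2}{2}+1=4$. When $\lDim\VpS=4$, Theorem~\ref{spb-Ger} delivers precisely a representation with $v_iu_j=0$ for $i>j$ and $v_iu_i\in\CC I$, i.e.\ case~(i). When $\lDim\VpS=1$, Lemma~\ref{lem:ldimV'=1} gives $\VS\subseteq\CC I$ and an upper-triangular representation, again case~(i) after absorbing scalars. The remaining subcases $\lDim\VpS\in\{2,3\}$ are where the genuine new work lies: here one picks a separating vector $\zeta$ for $\VpS$, uses Proposition~\ref{spb-notqu} — either $S\varrho(x)_{|\LS\zeta}$ is nilpotent for all admissible $x$ (whence by Lemma~\ref{spb-nilp} and a Gerstenhaber-type argument as in Theorem~\ref{spb-Ger} one is forced back into low $\lDim\VpS$ and ultimately case~(i)), or $S$ splits off a scalar multiple of a homomorphism $S_1$, $S=S_1+S_2$ with $S_2\in\Ell_2(A)$. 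In the latter situation I would analyse $S_2$ via Corollary~\ref{spb-2} (spectral boundedness of $S_2=S-S_1$ follows since $\lambda^{-1}S_1$ is a homomorphism, so $r(S_2x)\le r(Sx)+r(S_1x)$), recombine, and show the resulting $3\times3$ matrix $(v_iu_j)$ can be brought, by the elementary column/row operations on coefficients used throughout Section~\ref{sect:spbdd} (replacing $v_i\mapsto v_i-\mu v_j$, $u_j\mapsto u_j+\mu u_i$), into the skew-tridiagonal normal forms~(ii) or~(iii). The key structural point is that the ``$\lambda I$'' on the diagonal comes from Proposition~\ref{tr-spb} ($\bm{ba}=\sum v_iu_i\in\mathcal Z(A)=\CC I$ here), the vanishing super-super-diagonal and sub-sub-diagonal entries come from the Gerstenhaber triangularisability, and the shape $\zeta_1\otimes f$, $\zeta_0\otimes f$ of the off-diagonal rank-one pieces — together with the crucial sign $-\zeta_0\otimes f$ in position $(3,2)$ — is dictated by requiring that every matrix $M(S\varrho(x),B)$ obtained by conjugating with $x_k\in\invA$ stays bounded in spectral radius (the ``unbounded characteristic polynomial $P_M=Q+kR$'' obstruction from the end of the proof of Theorem~\ref{spb-Ger}).

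\textbf{The $\lDim\LS<3$ subcase.} If $\lDim\LS\le2$ while $\Dim\LS=\ell(S)=3$, then $\LS$ is locally linearly dependent, so by \cite[Theorem~2.3]{BrSe}-type structure (exactly as in the $\lDim\LS=1$ part of Corollary~\ref{spb-2}, now one dimension up) the $u_i$ factor through a small subspace; here $\Dim E\ge4$ is exactly what kills the exceptional two-dimensional phenomenon and forces $\VS$ to collapse, pushing us back into case~(i). I would treat this in parallel with the $\lDim\RS\le2$ situation by passing to $S^*$, which is spectrally bounded whenever $S$ is (this is standard and used implicitly throughout).

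\textbf{Expected main obstacle.} The hard part is \emph{not} producing \emph{some} normal form but showing the list is \emph{exhaustive and correct}: in the middle regime $\lDim\VpS\in\{2,3\}$ one must carefully track how the two competing conclusions of Proposition~\ref{spb-notqu} interact with the choices of separating vector and of the decomposition $S=M_{u,v}+S_{n-1}$, and then verify that after the homomorphism is peeled off the residual length-two operator, when glued back, produces \emph{exactly} the tridiagonal patterns~(ii)/(iii) and nothing else — in particular that the anti-symmetry $v_{3}u_{2}=-(v_{1}u_{2})$ (with the common vector $\zeta_0$) and the coincidence of the two super-diagonal entries are forced, not merely possible. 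This is precisely where the boundedness-of-$\{\sigma(S(yxy^{-1}))\}$ argument at the close of Theorem~\ref{spb-Ger}'s proof must be invoked repeatedly and with the right choice of $y$, and where one has to rule out a handful of near-miss configurations by explicit (but short) characteristic-polynomial computations of the type $P_M=Q+kR$ with $\deg R<\deg Q$.
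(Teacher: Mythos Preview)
Your ``if'' direction is fine and matches the paper. The ``only if'' direction, however, has two genuine errors that would make the argument collapse.

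\textbf{Subadditivity of the spectral radius.} In the $\lDim\VpS\in\{2,3\}$ regime you invoke Proposition~\ref{spb-notqu}, peel off $S_1=M_{u,v}$ with $vu\in\CC I\setminus\{0\}$, and then claim that $S_2=S-S_1$ is spectrally bounded because ``$r(S_2x)\le r(Sx)+r(S_1x)$''. This inequality is false: the spectral radius is not subadditive unless the summands commute. (Take $a=e_{12}$, $b=e_{21}$ in $M_2(\CC)$: $r(a)=r(b)=0$ but $r(a-b)=1$.) So you cannot feed $S_2$ into Corollary~\ref{spb-2}, and the whole reduction-to-length-two strategy breaks down. The paper does \emph{not} proceed via Proposition~\ref{spb-notqu} here; instead it observes (via Lemma~\ref{spb-nilp}) that
\[
\mathcal N=\bigl\{M\bigl(Sx,B\bigr):x\VS\zeta\subseteq\CC\zeta,\ x\zeta=0\bigr\}
\]
is a nilpotent linear subspace of $M_3(\CC)$, and then invokes Fasoli's classification \cite[Proposition~3]{Fas} of such subspaces to obtain two explicit normal forms. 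Case~1 of that classification leads (by the Theorem~\ref{spb-Ger} mechanism) to configuration~(i); Case~2 produces the skew-tridiagonal shape directly, and the remaining constraints ($\lambda_4=\lambda_5=0$, $\lambda_1=\lambda_2=\lambda_3$) are then forced one by one by your ``$P_M=Q+kR$'' obstruction, applied with several distinct choices of $x_k\in\invA$ and $x$ --- the paper uses about half a dozen such computations, some on $4\times4$ and $5\times5$ blocks, to exclude the near-miss configurations. Fasoli's result is the missing key lemma in your plan.

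\textbf{Passing to $S^*$.} You write that ``$S^*$ is spectrally bounded whenever $S$ is (this is standard)''. It is not: for the unilateral shift $a$ on $\ell^2$ and $b=a^*$ one has $ba=I$, so $M_{a,b}$ is a homomorphism and hence spectrally bounded, yet $ab=I-p\notin\CC I$, so $M_{b,a}=(M_{a,b})^*$ is \emph{not} spectrally bounded. Consequently you cannot symmetrise the $\lDim\LS\le2$ and $\lDim\RS\le2$ analyses. The paper treats $\lDim\LS=2$ directly via the Chebotar--\v Semrl trichotomy \cite{ChSe} for locally linearly dependent $3$-dimensional operator spaces (three subcases, one of which is excluded by $\Dim E\ge4$, the others handled by further nilpotent-subspace arguments combined with Proposition~\ref{tr-spb}), and $\lDim\LS=1$ via the rank-one structure from \cite{BrSe}; both lead to configuration~(i). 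This is substantially more work than your sketch suggests.
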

\begin{proof}
The ``if part'' of the statement follows directly from  Lemma~\ref{lem:suff-cond} and Corollary~\ref{cor:sufficient-cond}.

Now suppose that $S$ is spectrally bounded and $\lDim\LS=3$. If $\lDim\VpS=4$, we are done by Theorem~\ref{spb-Ger}.
Next choose  a separating vector $\zeta \in E $   of  $\LS$ such that $\VpS \zeta$ has maximal dimension. Fix a basis $B$ of $\LS\zeta$.
It follows from Lemma~\ref{spb-nilp} that the set
\be
\mathcal N= \{ M\bigl(Sx, B\bigr): x \VS\zeta \subseteq \CC \zeta, x \zeta=0\}
\ee
is a nilpotent subspace of $M_3(\CC)$. Suppose that $\lDim\VpS=3$. In view of \cite[Proposition 3]{Fas}, we distinguish two cases:

\smallskip\noindent
Case 1:
There exists a representation of ${S}$ of the form  ${S}= \sum_{i=1}^3 M_{u_i,v_i}$
for some $u_i, v_i \in \LE$  such that$\{v_1u_2 \zeta, v_1 u_3 \zeta, \zeta\}$ are linearly independent
and $v_i u_j\zeta\in\CC\zeta$ for all $i\geq j$. Suppose moreover that this fact is true for a dense set of separating vectors of $\LS$ and~$\VpS$.
Then, arguing as in the proof of Theorem~\ref{spb-Ger}, we show that $\RS u_1 \subseteq \CC I$.
We deduce that $v_j u_i\in\CC I$  for all $i \geq j$ and, finally, we show that ${S}= \sum M_{u_i',v_i'}$
where $v'_i u'_j =0$ for $i >j$ and $v'_i u'_i \in \CC I$.

\smallskip\noindent
Case 2:
There exists a representation of ${S}$ of the form  ${S}= \sum_{i=1}^3 M_{u_i,v_i}$
for some $u_i, v_i \in\LE$ (depending on $\zeta$) such that
\be
(v_iu_j \zeta)_{1 \leq i,j \leq 3}= \left(\begin{array}{ccc}
                                             \lambda_{11} \zeta & v_2u_3\zeta & \lambda_{13} \zeta\\
                                             v_2u_1 \zeta& \lambda_{22} \zeta & v_2u_3 \zeta \\
                                             \lambda_{31} \zeta & -v_2u_1 \zeta &\lambda_{33} \zeta
                                           \end{array}\right)\ \  \text{for some} \lambda_{ij} \in \CC.
\ee
Suppose that this assumption is true for every separating vector of $\LS$, with $\VpS \zeta$ of maximal dimension, in an open subset of $X$.
For every $ (i,j) \not\in \{(2,1), (2,3), (1,2), (1,3)   \} $,  the set of operators  $\{v_2 u_1, v_2 u_3, I,  v_i u_j\}$
is locally linearly dependent. Since $(v_i u_j-\lambda_{ij} I) \zeta=0$, we must have
$(v_i u_j-\lambda_{ij} I) E \subseteq \spa \{ v_2 u_1 \zeta, v_2 u_3 \zeta, \zeta\}$.
Set $ v_2 u_1 \zeta= \eta_1$ ,  $ v_2 u_3 \zeta= \eta_2$ and $\zeta= \eta_0$.
Pick  $\eta_3  \in E \setminus \spa \{ \zeta, \eta_1, \eta_2\}$. Write $E= \spa \{\zeta, \eta_1, \eta_2, \eta_3\} \oplus Y$,
for some subspace $Y$ of $E$. Let  $p\colon E \rightarrow Y$ be the natural projection.
Write $v_s u_t = \sum_{k=0}^3 \eta_k \otimes f_{st}^k + pv_su_t$. Then
for every $ (i,j) \not\in \{(2,1), (2,3), (1,2), (1,3)   \}$, we have $v_i u_j = \sum_{k=0}^2 \eta_k \otimes f_{ij}^k$.
Fix non-zero numbers $h_0, \ldots, h_3 \in \CC$ and set $ M(\zeta', h_0, \ldots, h_3)= \bigl(\sum_{k=0}^3 h_k f_{ij}^k (\zeta' )\bigr)_{1 \leq i,j \leq 3}$.
Let $\zeta' \in E$. Then for all but finitely many $\lambda \in \CC$, $\zeta'+ \lambda \zeta$ is a separating vector of~$\LS$.
Fix $\lambda \in \CC$ and choose $x \in A$ satisfying
\be
x \eta_k= h_k (\zeta'+ \lambda \zeta), \  x pv_su_t(\zeta'+ \lambda \zeta)=0\text{ and }  x (\zeta'+ \lambda \zeta)=0\quad(0 \leq k \leq 3).
\ee
Then
\be
M\bigl(Sx, \{u_1 (\zeta'+\lambda\zeta), \ldots, u_3 (\zeta'+ \lambda\zeta)\}\bigr)=M(\zeta'+ \lambda \zeta, h_0, \ldots, h_3)
\ee
which is nilpotent. Arguing as in the proof of \cite[Theorem 4.1]{NaMa13}, we show that
 \be (v_iu_j)_{1 \leq i,j \leq 3}= \left(\begin{array}{ccc}
                                             \lambda_1 I & \zeta_1 \otimes f & \lambda_5 I\\
                                             \zeta_0 \otimes f & \lambda_2 I & \zeta_1 \otimes f \\
                                             \lambda_4 I & - \zeta_0 \otimes f &\lambda_3 I
                                           \end{array}\right),
\ee
where $\lambda_i \in \CC$,  $\zeta_0, \zeta_1 \in E$, $f \in E^*$ and $f(\zeta)=1$.
Choose $x_k \in\invA$ and $x \in A$ such that
\be
x_k \zeta= \frac{1}{k} \zeta_0, \; x_k\zeta_0=\zeta,\; x_k\zeta_1=\zeta_1 \;\text {and }\; x\zeta_0= x\zeta=\zeta_0,\; x\zeta_1=0.
\ee
Then the matrix of ${S} (x_k^{}x x_k^{-1})$ with respect to $\{u_1 \zeta, u_2 \zeta, u_3 \zeta\}$ is:
\[
\left(\begin{array}{ccc}
                                             \lambda_1  &  0  & \lambda_5\\
                                             k  & \lambda_2  & 0  \\
                                             \lambda_4  & - k   &\lambda_3
                                           \end{array}\right).
\]
This entails that $\lambda_5=0$. Proceeding analogously, with $\zeta_1$ instead of $\zeta_0$, we show that $\lambda_4=0$.
Next choose   $x_k\in\invA$ and $x\in A$ such that
\[
x_k \zeta= \frac{1}{k} \zeta_0, \; x_k \zeta_0= \zeta, \; x_k \zeta_1= \zeta_1 \; \text {and }\; x \zeta_0= x \zeta =x \zeta_1= \zeta_0.
 \]
Then the matrix of ${S} (x_k^{}x x_k^{-1})$ with respect to $\{u_1 \zeta, u_2 \zeta, u_3 \zeta\}$ is:
\[
\left(\begin{array}{ccc}
                                             \lambda_1  &  1  & 0 \\
                                             k  & \lambda_2  & 1  \\
                                             0 & - k   &\lambda_3
                                           \end{array}\right).
\]
This implies that  $\lambda_1= \lambda_3$.
Next suppose there are $\zeta, \zeta'\in E$ such that the sets $\{\zeta_0,\zeta_1,\zeta,\zeta'\}$ and
$\{ u_2 \zeta, u_3 \zeta, u_1 \zeta', u_2 \zeta', u_3 \zeta'\}$ are linearly independent.  With no loss of generality,
we may assume that $f(\zeta')=1$ and $f(\zeta)=0$. Pick  $x_k \in\invA$ and $x \in A$ such that
\[
x_k \zeta'= \frac{1}{k}\zeta, \; x_k \zeta_0= \frac{1}{k}\zeta_0, \; x_k \zeta_1= \zeta_1, x_k \zeta= \zeta'
\]
and
\[
x \zeta'= x \zeta= x \zeta_1= \zeta, \;\; x \zeta_0= \zeta'.
\]
Then the matrix representation of ${S} (x_k^{}x x_k^{-1})$ with respect to $\{ u_2 \zeta, u_3 \zeta, u_1 \zeta', u_2 \zeta', u_3 \zeta'\}$ is
\[
\left(
  \begin{array}{ccccc}
    0 & 0 & 1 & 0 & 0 \\
    0 & 0 & 0 & -1 & 0 \\
    0 & 0 & \lambda_1 & 1 & 0 \\
    \lambda_2 k & 0 & 0 & \lambda_2 & 1 \\
    0& \lambda_1 k & 0 & 0 & \lambda_1\\
  \end{array}
\right)
\]
Its characteristic polynomial is
\be
t^2 (\lambda_1-t)^2 (\lambda_2-t)+(\lambda_2- \lambda_1)k(\lambda_1-t) t
\ee
hence, we must have $\lambda_2= \lambda_1$. We get the same conclusion if we swop $u_1$ and~$u_3$.

Now suppose that for every $\zeta, \zeta' \in E$, the sets $\{ u_2 \zeta', u_k \zeta', u_1 \zeta, u_2 \zeta, u_3 \zeta\}$, $k=1,3$
are linearly dependent. Then, for every separating vector $\zeta$ of $\LS$ and $k \in \{1,3\}$, either there are
$\alpha_k, \beta_k$ such that $(\alpha_k u_k + \beta_k u_2 )E \subseteq \LS \zeta$ or
$u_k, u_2$ are rank one modulo $\LS\zeta$.  Suppose towards a contradiction that $\lambda_1\neq\lambda_2$.
Suppose that $E$ is finite dimensional. Since $v_1u_3 =0$, $v_1$ cannot be injective. Thus $\lambda_1=0$.
Clearly, we can assume that $\lambda_2=1$. Since $v_2, u_2$ must be bijective, $u_1, u_3, v_1, v_3$ are rank~$1$.
Set $u_k= \eta_k \otimes g_k$, $k=1,3$. Then, with no loss of generality, we may assume that $g_k=f$.
Since $u_2$ is bijective and $v_2u_2= I$, $u_2\zeta_0=\eta_1$ and $u_2\zeta_1=\eta_3$.  On the other hand,
\be
v_1u_2 \zeta_0= f(\zeta_0) \zeta_1= v_1 \eta_1=0.
\ee
Thus, $f(\zeta_0)=0$.  Analogously, we get $f(\zeta_1)=0$. Pick $\eta\in E$ such that $f(\eta)=1$ and $\{\eta_1, \eta_2, u_2 \eta\}$
is linearly independent. Choose $x_k \in\invA$, and $x \in A$ with
\be
x_k \zeta_1= \frac{1}{k} \zeta_0, x_k \zeta_0= \zeta_1, x_k \eta=\eta, x \zeta_0= \eta= x \eta, x \zeta_1= \zeta_0.
\ee
Then
\be
M\bigl( {S} (x_k x x_k^{-1}), \{\eta_1, \eta_3, u_2 \eta\}\bigr)= \left(
                                                           \begin{array}{ccc}
                                                             0 & 0 & 1 \\
                                                             k & 0 & 0 \\
                                                             0 & 1 & 1 \\
                                                           \end{array}
                                                         \right).
\ee
This yields a contradiction, as desired. Now suppose that $E$ has infinite dimension. Suppose first that $\lambda_2\neq 0$.
Then $u_2$ is injective, hence $u_1, u_3$ have rank one modulo $\LS \zeta$, for each separating vector $\zeta$ of $\LS$.
This implies (using again the injectivity of $u_2$) that $\Dim (u_1 E+u_3 E ) \leq 2$. Applying $v_2$ to $u_1$ and $u_3$,
we infer that $u_1, u_3$ are rank~$1$. Thus $\lambda_1=0$. Pick $\eta, \eta'$ such that the sets
$\{u_1 \eta, u_2 \eta, u_3 \eta, u_2 \eta'\}$ and $ \{\eta, \eta', \zeta_0, \zeta_1\}$ are linearly independent.
With no loss of generality, we can suppose that $u_1= u_1 \eta \otimes f$, $u_3= u_3 \eta \otimes f$, $f(\eta)=1$
and $f(\eta')=0$. Choose $x_k \in \invA,\, x \in A$ with
\be
x_k \eta= \frac{1}{k}\zeta_0, x_k \zeta_0= \eta, x_k \zeta_1=\eta+\eta', x_k \eta'= \zeta_1
\ee
and
\be
x \eta=\zeta_0, x \eta'= \zeta_1, x \zeta_0= \zeta_0, x \zeta_1= 2 \zeta_0.
\ee
Then
\be
M\bigl({S} (x_k x x_k^{-1}), \{u_1 \eta, u_3 \eta, u_2 \eta, u_2 \eta'\}\bigr)= \left(
                                                                      \begin{array}{cccc}
                                                                        0 & 0 & 1 & 0 \\
                                                                        0 & 0 & -k & 0 \\
                                                                        k & 1 & 1 & 1 \\
                                                                        0 & 1 & 0 & 0 \\
                                                                      \end{array}
                                                                    \right)
\ee
Computing the characteristic polynomial, we get a contradiction. Thus, this case cannot occur.

Next suppose that $\lambda_1 \neq 0$, say $\lambda_1=1$. Then $u_1, u_3$ are injective.
As above, we see that $u_2$ must have rank one. Set $u_2= z \otimes g$, then we can assume that $g=f$, $v_1 z = \zeta_1$
and $v_3 z= - \zeta_0$. Choose $\eta, \eta'$ such that $\{u_1 \eta, u_3 \eta, u_2 \eta, u_1 \eta'\}$ is linearly independent.
Assume that $u_2 \eta=z$ and $f(\eta')=0$.
Choose $x, x_k$ as above to obtain
\be
M\bigl({S} (x_k x x_k^{-1}), \{u_1 \eta, u_3 \eta, u_2 \eta, u_1 \eta'\}\bigr)= \left(
                                                                      \begin{array}{cccc}
                                                                        1 & 0 & 1 & 1 \\
                                                                        0 & 1 & -k & 0 \\
                                                                        k & 1 & 0 & 0 \\
                                                                        0 & 0 & 1 & 0 \\
                                                                      \end{array}
                                                                    \right).
\ee
Once again, we get a contradiction using the characteristic polynomial.

Now suppose that $\lDim\VpS=2$. Similar arguments as above yield that
${S}= \sum_{i=1}^3 M_{u_i,v_i}$, $u_i, v_i \in \LE$ with either
$v_i u_j=0$ for all $i > j$ and $v_i u_i \in \CC I$ for all~$i$, or
\be
(v_iu_j)_{1 \leq i,j \leq 3}= \left(\begin{array}{ccc}
                                             \lambda_{1} I & \zeta_0 \otimes g  & \lambda_4 I\\
                                             \zeta_0 \otimes f & \lambda_2 I & \zeta_0 \otimes g \\
                                             \lambda_5 I & - \zeta_0 \otimes f &\lambda_3 I
                                           \end{array}\right);
\ee
where $  \lambda_i  \in \CC$, $\zeta_0 \in E$ and  $f, g \in E^*$. This case is treated analogously.

Suppose now that $\lDim \LS=2$.  Suppose first that there exists $b \in \RS$ with $b \LS=0$,
then we can write ${S}= M_{a,b} + \sum_{i=1}^2 M_{u_i,v_i}$ for suitable $u_i, v_i$.
Our arguments show that $\sum_{i=1}^2 M_{u_i,v_i}$ must be spectrally bounded. Thus we get easily the desired conclusion.

Next suppose the contrary. Write $ {S}= \sum_{i=1}^3 M_{a_i,b_i}$.  Using ~\cite{ChSe}, we distinguish 3 cases.

\smallskip\noindent
Case 1: $\Dim \LS E =2$.
Let $\zeta \in E$ be such that $\LS \zeta$ has maximal dimension. Certainly we may assume that  $a_3 \zeta=0$. Using the set
\be
\{ Sx_{|\LS \zeta}: x \VS \zeta \subseteq \CC \zeta, x \zeta=0 \}
\ee
we see that there exists $b \in \RS$  such that $ba_i E \subseteq \CC \zeta$.
Consider $\zeta_1, \zeta_2, \zeta_3$ linearly independent separating vectors of~$\LS$.
Then there exists $b'_1, b'_2, b'_3$ with $b'_i \LS E \subseteq \CC \zeta_i$.  Therefore $\{b'_1, b'_2, b'_3\}$ is linearly independent.
Thus, the dimension of $E$ cannot exceed~$3$. This case cannot occur.

\smallskip\noindent
Case 2:
The vector space $\LS$ is standard in the sense of~\cite{ChSe}.
Fix $u \in \LS E$.  We claim that  either $\Dim \RS u\leq 1$ or $\Dim \RS u=3$.
Choose $\zeta \in E$ such that $u= a \zeta$ for some $a\in\LS$.  If $\zeta \not\in\RS u$, choose $x \in A$
such that $x \zeta=0$ and $x \VS\zeta \subseteq \CC \zeta$. Then $Sx$ must be nilpotent by Lemma~\ref{spb-nilp}.
A straightforward argument shows  that $\Dim\RS u\leq1$.
Next suppose that $\Dim \RS u \geq 2$. Then $\zeta \in \RS u$ and $\ker a + \CC \zeta \subseteq \RS u$.
Since $\Dim aE=2$ and $\Dim \RS u \leq 3$, we must have $\Dim E=4$ and $\Dim\RS u=3$, as desired.  Now write
\be
a_1= u_1 \otimes f_2 + u_2 \otimes f_3, \; a_2= - u_1 \otimes f_1+ u_3 \otimes f_3, a_3= u_2 \otimes f_1 + u_3 \otimes f_2,
\ee
where  $f_1, f_2, f_3$ are linearly independent functionals and  $ u_1,u_2, u_3$ are linearly independent vectors of~$E$.
Then, for every $x \in A$, we have
\be
Sx= u_1 \otimes (f_2 xb_1-f_1 x b_2) + u_2 \otimes (f_3 x b_1+ f_1 x b_3)+ u_3 \otimes (f_3 x b_2+ f_2 x b_3).
\ee
Choose $e_j \in E$ with $f_i (e_j)= \delta_{ij}$.
Fix $j \in \{1,2,3\}$. Let $k,l$ be such that $\{j,k,l\}= \{1,2,3\}$. The vector subspace
\be
\{M\bigl(Sx, \{a_ke_j, a_l e_j\}\bigr): x \in A, xe_j=0, x \VS e_j \subseteq \CC e_j\}
\ee
is nilpotent. Thus there exists $z_j \in \spa\{a_ke_j,a_l e_j\}$ such that $ \spa \{b_k, b_l\} z_j \subseteq \CC e_j$.
Since $\Dim \RS z \neq 2$ for every $z \in \LS E$, we must  have $\Dim \RS z \leq 1$ for every $z \in \LS E$.
Now using Proposition~\ref{tr-spb}, we get the desired conclusion.

\smallskip\noindent
Case 3: The vector space $\LS$ is not minimal linearly dependent.

With no loss of generality, we may suppose that there exists $\eta \in E$ such that $a_k= \eta \otimes f_k$ for $k=1,2$.  Then for all $x \in A$ we have
\be
Sx= \eta \otimes \sum_{k=1}^2f_kxb_k+a_3xb_3.
\ee
Suppose that $b_3a_3 \not\in \CC I$. Choose $\zeta \in E$ such that the two sets  $\{b_3 a_3 \zeta, \zeta\}$
and $\{a_1 \zeta, a_3 \zeta\}$  are linearly independent. The vector space
\be
\mathcal N= \{M\bigl(Sx, \{a_1 \zeta, a_3 \zeta\}\bigr): x \in A,\; x \zeta=0, \;x b_3 a_3 \zeta, x b_3 \eta \in \CC \zeta \}
\ee
is nilpotent. Hence $\Dim \mathcal N \leq 1$.  Since for every  $x \in A$ with  $x b_3 a_3 \zeta, x b_3 \eta \in \CC \zeta$, we have
\be
M\bigl(Sx, \{a_1 \zeta, a_3 \zeta\}\bigr)= \left(
                                    \begin{array}{cc}
                                      \sum_{k=1}^2f_k(xb_k \eta) & \sum_{k=1}^2f_k(xb_k a_3 \zeta) \\
                                      \pi x b_3 \eta & \pi x b_3 a_3 \zeta \\
                                    \end{array}
                                  \right),
\ee
(here we suppose that $a_1 \zeta= \eta$).
Thus $b_k \eta \in \spa \{b_3a_3 \zeta, \zeta\}$, for $ 1 \leq k \leq 3$.
Now assume that $b_3 \eta \not\in \CC \zeta$. Then, replacing $a_3$ with $a_3+ \lambda a_1+ \lambda' a_2$ if needed,
we see that we  can assume that $b_1 \eta, b_2 \eta \in \CC \zeta$. It follows from Proposition~\ref{tr-spb}
that $b_3a_3 \zeta \in \CC \zeta$, a contradiction. Next suppose that $b_3 \eta \in \CC \zeta$.
With no loss of generality, we can suppose that $b_2 \eta \in \CC \zeta$. It is easy to infer that $b_1 \eta=0$,
and once again we get $b_3 a_3 \zeta \in \CC \zeta$, which is not possible. Hence,  we have $b_3 a_3\in \CC I$. Since we can replace
$a_3$ by $a_3+ \lambda a_1+ \lambda' a_2$, for every $\lambda, \lambda' \in \CC$, we get $b_3 a_k \in \CC I$ for each~$k$.
But $a_1, a_2$ are rank one, therefore $b_3a_1=b_3a_2=0$.  Using again the set $\mathcal N$ we see that
$b_1 \eta, b_2 \eta \in \CC \zeta$. Hence, $b_1 \eta= b_2 \eta=0$. The proof of this case is complete.

\smallskip
Finally, assume that $\lDim \LS=1$.   Then there exists $\eta \in E, f_i \in E^*$ such that $a_i=\eta \otimes f_i$.
Since $\Dim E \geq 4$, we get easily the desired conclusion using Proposition~\ref{tr-spb}.
\end{proof}
The exceptional cases $\Dim E=2$ and $\Dim E=3$ do not seem to allow a concise description (such as in Corollary~\ref{spb-2})
and thus will not be discussed here.

\begin{acknowledgement}
Part of the research on this paper was carried out while the second-named author visited the Universit\' e Moulay Ismail
in Morocco. He would like to express his gratitude for the generous hospitality and support received from his colleagues there.
This visit was supported by the London Mathematical Society under their ``Research in Pairs'' programme.
\end{acknowledgement}

\end{document}